\providecommand{\customgenericname}{}
\newcommand{\newcustomtheorem}[2]{%
\newenvironment{#1}[1]
{%
	\renewcommand\customgenericname{#2}%
	\renewcommand\theinnercustomgeneric{##1}%
	\innercustomgeneric
}
{\endinnercustomgeneric}
}
\def\NZQ{\mathbb}               
\def\NN{{\NZQ N}}
\def\QQ{{\NZQ Q}}
\def\ZZ{{\NZQ Z}}
\def\RR{{\NZQ R}}
\def\m{\mathfrak{m}}
\def\n{\mathfrak{n}}
\DeclareMathOperator*{\Ht}{ht}
\DeclareMathOperator*{\Ho}{H}
\DeclareMathOperator*{\depth}{depth}
\DeclareMathOperator*{\Tor}{Tor}
\DeclareMathOperator*{\Ext}{Ext}
\DeclareMathOperator*{\Ch}{char}
\DeclareMathOperator*{\soc}{soc}
\DeclareMathOperator*{\edim}{emb.dim}
\DeclareMathOperator*{\syz}{Syz}
\DeclareMathOperator*{\codim}{codim}
\DeclareMathOperator*{\curv}{curv}
\DeclareMathOperator*{\cx}{cx}
\DeclareMathOperator*{\pd}{pd}
\DeclareMathOperator*{\codepth}{codepth}
\DeclareMathOperator*{\lw}{ll}
\DeclareMathOperator*{\rk}{rank}
\newtheorem{theorem}{Theorem}[section]
\newtheorem{lemma}[theorem]{Lemma}
\newtheorem{corollary}[theorem]{Corollary}
\newtheorem{proposition}[theorem]{Proposition}
\newtheorem{remark}[theorem]{Remark}
\newtheorem{example}[theorem]{Example}
\newtheorem{definition}[theorem]{Definition}
\newtheorem{question}[theorem]{Question}
\newtheorem*{acknowledgement}{Acknowledgement}
\newtheorem*{conj}{Conjecture}
\newtheorem{theoremx}{\bf Theorem}
\title{On Tor-vanishing of local rings}
\date{\today, \currenttime}
\author{Shrikant Shekhar, Anjan Gupta}
\address{Department of Mathematics\\
Indian Institute of Science Education and Research Bhopal\\
Bhopal Bypass Road, Bhopal, Madhya Pradesh, India. Pin - 462066.}
\email{shrikant19@iiserb.ac.in or shrikantshekhar21@gmail.com, anjan@iiserb.ac.in}
\email{}
\thanks{Corresponding author: Shrikant Shekhar; 
{\it email: shrikantshekhar21@gmail.com, shrikant19@iiserb.ac.in}}
\begin{document}

\begin{abstract} 
Let $R$ be a local ring with residue field $k$ and $M$, $N$ be finitely generated modules over $R$. It is well known that $\Tor^R_i(M, N) = 0$ for $i \gg 0$ if 
$\pd_R(M) < \infty$ or $\pd_R(N) < \infty$. The ring $R$ is said to satisfy the Tor-vanishing property if the converse holds, that is, $\Tor^R_i(M, N) = 0$ for $i \gg 0$ implies $\pd_R(M) < \infty$ or $\pd_R(N) < \infty$. Interest in the Tor-vanishing property stems from the fact that Cohen–Macaulay local rings satisfying this property also satisfy the Auslander–Reiten conjecture.

In this article, we study a variant of this property. If $R$ is a generalized Golod ring, we prove that $\Tor^R_i(M, N) = 0$ for $i \gg 0$ implies $\{\curv_R M, \curv_R N \} \cap \{0, 1\} \neq \emptyset$. A key intermediate step in our proof is to show that $\curv_R M \in \{0, 1, \curv_R k\}$ for any module $M$ over a generalized Golod ring $R$. As an application, we prove that generic Gorenstein local rings, {non-trivial} connected sum of generalized Golod-Gorenstein rings satisfy the Tor-vanishing property and consequently the Auslander-Reiten conjecture. Our method suggests a uniform approach and recovers many old results on the Tor-vanishing property.
\end{abstract}

\maketitle

\noindent {\it Mathematics Subject Classification: 13D07, 13D02, 13H10, 13D40.}

\noindent {\it Keywords: Generalized Golod ring, Poincar\'e Series, Auslander-Reiten conjecture, Tor-vanishing}

\section{Introduction}
Let $R$ be a commutative Noetherian local ring and $M, N$ be finitely generated modules over  $R$. 
The study of asymptotic vanishing of $\Tor^R_i(M, N), i \geq 1$ and its consequence has long been a central theme of commutative algebra, pioneered by seminal work of Auslander \cite{auslander1999modules} and Lichtenbaum \cite{lichtenbaum1966vanishing}. We are interested to know about a ring $R$ which satisfies the property that $\Tor_i^R(M, N) = 0$ for $i \gg 0$ implies either $M$ or $N$ has finite projective dimension. Rings with this property are said to satisfy the trivial Tor-vanishing property \cite[Definition 3.1]{lyle2020extremal}. Likewise, the ring $R$ is said to satisfy the trivial Ext-vanishing property if $\Ext^i_R(M, N) = 0$ for $i \gg 0$ implies that either the projective dimension of $M$ or the injective dimension of $N$ is finite. For brevity, we refer to the trivial Tor-vanishing property simply as the Tor-vanishing property. If $R$ is a Cohen-Macaulay local ring having a canonical module, then both properties are equivalent \cite[]{lyle2020extremal}. These  vanishing properties have been extensively studied by various authors, viz.  \c{S}ega \cite{csega2003vanishing}, Huneke et al. \cite{huneke2004vanishing}, Nasseh and Wagstaff \cite{nasseh2017vanishing}, Ghosh and Puthenpurakal \cite{ghosh2019vanishing}, Avramov et al.  \cite{avramov2022persistence}, Monta\~{n}o and Lyle \cite{lyle2020extremal}.
Both properties are very strong as \c{S}ega proved that a complete intersection ring of codimension at least two does not satisfy the Tor-vanishing property. 
Our interest in this topic is fueled by the fact that a ring having these properties satisfies a well-known conjecture in commutative algebra and representation theory of algebras, namely the Auslander-Reiten conjecture. The statement below is its version for commutative rings.

\begin{conj}{\rm (Auslander-Reiten conjecture \cite{auslander1975generalized})}{\label{m12}}
Let $R$ be a commutative Noetherian local ring and $M$ be a finitely generated $R$-module. If $\Ext^i_R (M, M \oplus R) = 0$ for every $i > 0$, then $M$ is free.
\end{conj}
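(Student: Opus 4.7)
The plan is to show that the hypothesis $\Ext^i_R(M, M \oplus R) = 0$ for all $i > 0$ forces $M$ to be free, by separating the hypothesis into its two components. First I would split the vanishing into $\Ext^i_R(M, R) = 0$ and $\Ext^i_R(M, M) = 0$ for all $i > 0$. The first identifies $M$ as totally reflexive in the sense of Auslander--Bridger, so $M$ has Gorenstein dimension zero; in particular $M \cong M^{**}$, every syzygy $\Omega^n M$ inherits the same property, and one may pass to a high syzygy without loss of generality.

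My next step would be to upgrade the self-Ext vanishing into finite projective dimension: combined with total reflexivity, $\pd_R M < \infty$ collapses to $\pd_R M = 0$, which is exactly the desired freeness. To extract $\pd_R M < \infty$ from $\Ext^i_R(M, M) = 0$, I would invoke the duality available on a totally reflexive module, which (after a Cohen--Macaulay reduction using the canonical module) recasts the self-Ext vanishing as asymptotic self-Tor vanishing $\Tor^R_i(M, M^*) = 0$ for $i \gg 0$. Then one would apply the Tor-vanishing property of $R$ to conclude that either $\pd_R M < \infty$ or $\pd_R M^* < \infty$, the two being equivalent under total reflexivity.

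The main obstacle is establishing the Tor-vanishing property of $R$ in the required generality. \c{S}ega's examples rule out any blanket principle, since Tor-vanishing fails on complete intersections of codimension at least two, so the rigidity one needs must be extracted from structural data about $R$. My plan to supply it is a curvature trichotomy: show that $\curv_R M \in \{0, 1, \curv_R k\}$ for every finitely generated $R$-module, so that the asymptotic vanishing $\Tor^R_i(M, M^*) = 0$ forces $\curv_R M \in \{0, 1\}$; the curvature-zero case is $\pd_R M < \infty$ directly, while the curvature-one case must be eliminated by a separate rigidity argument that combines total reflexivity with the bounded-Betti structure. Establishing this trichotomy for every Noetherian local $R$ is the deepest step and the true crux of the conjecture, and it is the place where a direct attack must concentrate the hardest analytic input.
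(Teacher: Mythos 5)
This statement is the Auslander--Reiten conjecture, which the paper records as an open conjecture and does not prove; the paper only verifies it for specific classes of rings by establishing the Tor-vanishing property for those rings. Your proposal is therefore not comparable to a proof in the paper, and as a standalone argument it has fatal gaps. The decisive one is the final step: the curvature trichotomy $\curv_R M \in \{0,1,\curv_R k\}$ does not hold for arbitrary Noetherian local rings. The paper proves it only for generalized Golod rings (Lemma \ref{m5}), and explicitly notes that in general the set of curvatures of modules over a fixed ring can be infinite (Roos, \cite[Corollary 4.5]{roos2003modules}). More fundamentally, the route through the Tor-vanishing property cannot close even where the trichotomy holds: \c{S}ega showed that complete intersections of codimension at least two fail Tor-vanishing, yet they are precisely rings where every module of infinite projective dimension has curvature $1$, so your ``separate rigidity argument'' to eliminate the curvature-one case is not a technical afterthought but the entire difficulty, and no such argument is supplied.

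There are also gaps earlier in the reduction. The passage from $\Ext^i_R(M,M)=0$ to asymptotic vanishing of $\Tor^R_i(M,M^*)$ via duality on a totally reflexive module requires $R$ to be Cohen--Macaulay with a canonical module (this is the setting in which the paper cites the equivalence of the Ext- and Tor-vanishing properties), whereas the conjecture is stated for all Noetherian local rings. In short, your plan reduces a famous open problem to statements that are either themselves open or demonstrably false in the generality you need; it does not constitute a proof, and it should not be read as recovering anything the paper claims, since the paper claims only conditional results (Tor-vanishing for generalized Golod rings with $d_R(1)\neq 0$, generic Gorenstein rings, connected sums, etc.), each of which is where the real work lives.
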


There is also another motivation for our interest. A ring $R$ is said to satisfy the Tor-persistence property if $\Tor_i^R(M, M) = 0$ for $i \gg 0$  implies that $M$ has finite projective dimension. An open problem \cite{avramov2022persistence} asks if all rings satisfy the Tor-persistence property. Of course, the Tor-vanishing property implies the Tor-persistence property. The main objective of this article is to find new examples of rings that satisfy the Tor-vanishing property and thereby also satisfy the Tor-persistence property as well as the Auslander-Reiten conjecture.

Now assume that $R$ is a local ring with maximal ideal $\m$, residue field $k$ and $M$ is a module over $R$. The sequence  of Betti numbers $\{\beta_n^R (M)\}$  of $M$ is collected into a more palatable form by considering the Poincar\'e series $P^R_M(t) = \sum_{i = 0}^{\infty} \beta_i^R (M) t^i \in \ZZ[|t|]$. An example attributed to Anick \cite{anick1982counterexample} shows that $P^R_M(t)$ may not be a rational function. Nonetheless, there is a term-wise inequality of formal power series :

$$ \frac{(1+t)^{\edim(R)}}{(1-t^2)^{\rk_k \Ho_1(K^R)}}  \prec P_k^R(t) \prec  \frac{(1+t)^{\edim(R)}}{1 - \sum_{j=1}^{\codepth R} \rk_k \Ho_j(K^R) t^{j+1}},$$
where $K^R $ denotes the Koszul complex on a minimal generating set of the maximal ideal $\m$. 

We consider two extremal cases. When the inequality on LHS is equality, the ring $R$ is a complete intersection. In contrast,  when equality holds on  RHS, the ring $R$ is Golod. Avramov \cite{avramov1994local} found a new class of rings namely generalized Golod rings
of which both complete intersection and Golod rings are examples. He proved that when $R$ is a generalized Golod ring, then there is a polynomial $d_R(t) \in \ZZ[t]$ such that $d_R(t)P^R_M(t) \in \ZZ[t]$ for all $R$-modules $M$. The converse is unknown \cite[Problem 10.3.5]{avramov1998infinite}.

Monta\~{n}o and Lyle studied the Tor-vanishing property of several new examples of generalized Golod rings. The success of their work lies in the following remarkable result. 

\begin{theoremx}\label{MOTLYLE20}{\rm(Monta\~{n}o and Lyle, \cite{lyle2020extremal})}
Let $R$ be an Artinian local ring that is also generalized Golod and set $c =  \codim(R)$, $e= \text{multiplicity of}\ R$,  and $\ell = \min \{ i : \m^{i} = 0 \}$. Assume 
\begin{equation}
e \leq 2c+\ell-3,
\end{equation}
then for any $R$-modules $M$ and $N$, we have
\begin{enumerate}
\item[$(1)$] If $\Tor_i^R(M,N)=0$ for $i \gg 0$, then $\cx_R(M) \leq 1$, or, $\cx_R(N) \leq 1$.
\item[$(2)$] If, additionally, $e \leq 2c+\ell-4$, then $R$ satisfies Tor-vanishing.
\end{enumerate}
\end{theoremx}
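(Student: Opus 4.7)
The plan is to leverage Avramov's theorem for generalized Golod rings, which gives every finitely generated $R$-module $M$ a rational Poincar\'e series $P_M^R(t) = n_M(t)/d_R(t)$ with a common denominator $d_R(t) \in \ZZ[t]$. The multiplicity hypothesis is used to pin down the factorization of $d_R(t)$ so precisely that the curvature and complexity of every module can be read off, after which Tor-vanishing translates into an arithmetic constraint on the numerators.

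\textbf{Step 1 (Structure of $d_R(t)$ and curvature trichotomy).} Since $R$ is Artinian, $e = \lambda(R) = \sum_{j \geq 0}\dim_k(\m^j/\m^{j+1})$. Combining the bound $e \leq 2c+\ell-3$ with Euler-characteristic estimates for the Koszul complex $K^R$ gives a sharp ceiling on $\sum_j \dim_k \Ho_j(K^R)$, equivalently on the low deviations $\epsilon_2(R), \epsilon_3(R), \ldots$. Substituting this into Avramov's formula for $P_k^R(t)$ in the generalized Golod setting, I expect $d_R(t)$ to factor as $(1-t)^a(1+t)^b\, g(t)$, where $g(t)$ is a polynomial whose smallest positive real root equals $1/\curv_R k$. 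Because $\curv_R M$ is the reciprocal of the smallest positive real pole of $P_M^R(t)$ after cancellation with $n_M(t)$, this yields the trichotomy $\curv_R M \in \{0,1,\curv_R k\}$. In particular, $\cx_R M \geq 2$ forces $\curv_R M = \curv_R k > 1$.

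\textbf{Step 2 (Tor-vanishing forces low complexity; Part (1)).} Suppose $\Tor_i^R(M,N)=0$ for $i \gg 0$ and, for contradiction, both $\cx_R M, \cx_R N \geq 2$. By Step 1 both curvatures equal $\curv_R k$. I would use a Cartan--Eilenberg / change-of-rings spectral sequence, or equivalently a minimal Avramov model of $R$ as a dg algebra over a Cohen presentation $Q$, to compare the product $P_M^R(t)\,P_N^R(t)$ to the generating function of $\dim_k \Tor_i^R(M,N)$. Eventual vanishing of $\Tor$ forces $n_M(t)\,n_N(t)$ to be divisible by a high power of the ``large'' factor $g(t)$ of $d_R(t)$, which is incompatible with $\curv_R M = \curv_R N = \curv_R k$ as determined by Step~1.

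\textbf{Step 3 (Strengthening to Part (2)).} By Part (1) we may assume $\cx_R M \leq 1$; it remains to rule out $\cx_R M = 1$ with $\pd_R M = \infty$. Such a module has a periodic high syzygy, so $P_M^R(t)$ has a purely cyclotomic denominator. The extra unit of slack in $e \leq 2c+\ell-4$ forces $g(t)$ to be nontrivial in a further sense (e.g.\ $\deg g \geq 2$ with no cyclotomic roots remaining), so a classical rigidity argument for periodic modules (or a direct cancellation argument applied to $\Omega^nM$) yields either $\pd_R N < \infty$ or a contradiction with the Tor-vanishing hypothesis.

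\textbf{Main obstacle.} The hardest step is Step~2, the passage from cohomological vanishing of $\Tor$ to an arithmetic divisibility constraint on $n_M(t)\,n_N(t)$. In the pure Golod case there is a clean formula relating $P^R_{M\otimes^L N}(t)$ to $P_M^R(t)\,P_N^R(t)$ coming from the bar construction, but for generalized Golod rings the hypothesis only gives vanishing of higher Massey products in the resolution of $k$; extending this control to minimal resolutions of arbitrary $M$ and $N$ and tracking how the vanishing of $\Tor^R_*(M,N)$ propagates through the Avramov model is the technically delicate part.
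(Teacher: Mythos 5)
Your proposal captures the right spirit (pole-counting on Poincar\'e series, curvature analysis via the common denominator $d_R(t)$), but both main steps have genuine gaps, and the structure of the argument diverges from the paper's in a way that introduces unnecessary difficulties.

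\textbf{On Step 1.} You try to derive the curvature trichotomy $\curv_R M \in \{0,1,\curv_R k\}$ by using the multiplicity bound $e\le 2c+\ell-3$ to pin down a factorization $d_R(t)=(1-t)^a(1+t)^b g(t)$. This is not how the paper proceeds and it conceals two missing ingredients. First, even granting such a factorization, you need to know that $1/\curv_R M$ is actually a \emph{zero} of $d_R(t)$ whenever $\curv_R M>1$; that is not formal, and the paper supplies it by invoking a classical theorem (Pringsheim/Titchmarsh, Theorem~\ref{m2}) that the radius of convergence of a power series with nonnegative coefficients is a singular point. Second, you need \emph{uniqueness} of the root in $(0,1)$, not just identification of the smallest one, to exclude modules whose curvature is a ``second'' root of $d_R$. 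The paper proves this uniqueness (Lemma~\ref{m4}) for \emph{all} generalized Golod rings, without any multiplicity hypothesis, by writing $d_R(t)=v(t)w(t)$ with $w(t)=1-\sum_{i\ge q}\dim_k H_i(U)\,t^{i+1}$ and observing that $w$ is strictly decreasing on $(0,1)$. The trichotomy $\curv_R M\in\{0,1,\curv_R k\}$ (Lemma~\ref{m5}) therefore holds generally; the multiplicity bound is not what produces it, so your Step~1 as written would not go through even in outline.

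\textbf{On Step 2.} Here the gap is more serious. You appeal to change-of-rings spectral sequences or Avramov models and flag this as the ``technically delicate part,'' but the key passage is in fact elementary: after replacing $M$ and $N$ by sufficiently high syzygies we may assume $\Tor_i^R(M,N)=0$ for all $i\ge 1$, and then the tensor product $F_\bullet\otimes_R G_\bullet$ of minimal free resolutions is itself a \emph{minimal free resolution} of $M\otimes_R N$, giving the exact identity $P^R_{M\otimes N}(t)=P^R_M(t)\,P^R_N(t)$. Since $M\otimes_R N$ is again an $R$-module, $P^R_{M\otimes N}(t)$ has denominator $d_R(t)$ and hence, by Lemma~\ref{m4}, at most a simple pole in $(0,1)$. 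But if both $\curv_R M$ and $\curv_R N$ exceed $1$, the trichotomy gives both equal to $\curv_R k$, so both $P^R_M$ and $P^R_N$ have simple poles at $\rho=1/\curv_R k$ and the product has a pole of order $\ge 2$ at $\rho$: contradiction. No divisibility bookkeeping on numerators and no bar construction/Massey product analysis is required. The ``main obstacle'' you identify is not actually an obstacle.

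\textbf{On Step 3.} The paper routes part (2) (and the passage from curvature to complexity in part (1)) entirely through Peeva's quantitative estimate \cite[Proposition~2]{peeva1998exponential}: under $e\le 2c+\ell-4$ every module of infinite projective dimension has $\curv_R>1$, so the trichotomy collapses the dichotomy of Theorem~\ref{m8} directly to the Tor-vanishing property; under $e\le 2c+\ell-3$, modules with $\curv_R=1$ have $\cx_R\le 1$. Your proposed detour through periodicity of high syzygies and cyclotomic denominators is not needed and is substantially harder to make rigorous. In summary: you correctly intuited that everything reduces to the arithmetic of $d_R(t)$, but the precise lemmas (Titchmarsh's singular-point theorem, at most one root of $d_R$ in $(0,1)$, multiplicativity of Poincar\'e series under Tor-vanishing, Peeva's curvature estimate) are all absent from your sketch and the approach as stated would not yield a complete proof.
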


The curvature of a module $M$ denoted by $\curv_R M$ is a measure of the exponential growth of the Betti sequence $\{\beta^R_n(M)\}$ (see Definition \ref{curv}). Our purpose in this article is to generalize the above theorem as follows (see Theorem \ref{m8} and Remark \ref{MotPeeva}).

\begin{theoremx}\label{inmain}
Let $R$ be a generalized Golod ring and $M$, $N$ be modules over $R$ such that $\Tor_i^R (M,N) = 0$ for $i \gg 0.$ Then $\{\curv M, \curv N \} \cap \{0, 1\} \neq \emptyset$.
\end{theoremx}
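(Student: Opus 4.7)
The plan is to argue in two stages: first establish the intermediate curvature trichotomy advertised in the abstract, and then leverage it via a Poincar\'e-series computation forced by $\Tor$-vanishing.

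\textbf{Stage 1: Curvature trichotomy.} I would first prove that for every finitely generated $R$-module $M$ over a generalized Golod ring $R$,
$$\curv_R M \in \{0, 1, \curv_R k\}.$$
By Avramov's theorem, a common denominator $d_R(t) \in \ZZ[t]$ exists such that $d_R(t) P^R_M(t) \in \ZZ[t]$ for every $M$, so each Poincar\'e series is rational and $\curv_R M$ equals the reciprocal of the smallest absolute value of a complex pole of $P^R_M(t)$. Using the generalized-Golod hypothesis -- for instance through a Golod surjection $R \to S$ with $S$ a complete intersection, or equivalently through eventual vanishing of the deviations $\e_i(R)$ -- I would analyze the irreducible factorization of $d_R(t)$ and show that every complex root either lies on the unit circle (hence contributes curvature $0$ or $1$) or equals one of the dominant roots of absolute value $1/\curv_R k$; no intermediate root is possible.

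\textbf{Stage 2: From trichotomy to $\Tor$-vanishing.} By Stage 1 the theorem reduces to ruling out $\curv_R M = \curv_R N = \curv_R k > 1$. After passing to sufficiently high syzygies (which affects neither the $\Tor$-vanishing hypothesis nor either curvature) I may assume $\Tor^R_i(M, N) = 0$ for all $i \geq 1$; the tensor of minimal free resolutions of $M$ and $N$ is then a (generally non-minimal) free resolution of $M \otimes_R N$, yielding the coefficientwise inequality
$$P^R_{M \otimes_R N}(t) \preceq P^R_M(t) \cdot P^R_N(t).$$
Writing both sides as rational functions with denominator $d_R(t)$, the right-hand side has a pole at the dominant root $\rho = 1/\curv_R k$ of order equal to the sum of the positive pole orders of $P^R_M$ and $P^R_N$ there. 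I would then establish a universal cap -- realised by $P^R_k(t)$ and governed by the multiplicity of $\rho$ in $d_R(t)$ -- on the pole order at $\rho$ of any Poincar\'e series over $R$, and verify that the sum above exceeds this cap, forcing the desired contradiction.

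\textbf{Main obstacle.} The technical heart of the argument is Stage 1: controlling the root set of $d_R(t)$ via the generalized-Golod hypothesis requires either a careful use of the Golod-homomorphism factorization of Poincar\'e series or of the graded structure of the homotopy Lie algebra $\pi^*(R)$, and this is where the generalized-Golod assumption is essentially used. Stage 2 is in principle a pole-counting exercise once Stage 1 is available, but calibrating the universal cap so that strict inequality actually holds in the contradiction will likely require a sharper identity for $P^R_{M \otimes_R N}(t)$ arising from the $\Tor$-vanishing hypothesis (for example a Cartan--Eilenberg spectral-sequence argument), rather than only the crude coefficientwise bound above.
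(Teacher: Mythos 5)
Your high-level architecture matches the paper's proof exactly: establish the curvature trichotomy $\curv_R M \in \{0,1,\curv_R k\}$, then rule out the case $\curv_R M = \curv_R N = \curv_R k > 1$ by counting pole orders of the Poincar\'e series of $M\otimes_R N$. But Stage~1 as sketched does not contain a workable argument, and Stage~2 contains a fixable but real slip.

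In Stage~1 you propose to control \emph{all} complex roots of $d_R(t)$, showing each lies either on the unit circle or at the dominant radius $1/\curv_R k$. This is both far stronger than needed and, as far as I can tell, unprovable in general; the paper makes no claim about complex roots. The missing idea is Pringsheim's theorem (the Titchmarsh result the paper cites): since every Poincar\'e series has non-negative integer coefficients, its radius of convergence is itself a singular point, so $1/\curv_R M$ is always a \emph{positive real} pole. This reduces the problem to showing $d_R(t)$ has at most one root, counted with multiplicity, in the open interval $(0,1)$. That in turn is elementary once you invoke Avramov's formula $d_R(t) = v(t)w(t)$ with $v(t)=\prod(1-t^{2i})^{e_{2i}}$ root-free on $(0,1)$ and $w(t) = 1 - \sum_{i\ge q}\dim_k H_i(U)t^{i+1}$: its derivative $w'(t)$ is strictly negative on $(0,1)$ whenever some $H_i(U)\ne 0$, so $w$ is strictly decreasing and can vanish at most once there. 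Your appeal to a Golod surjection from a complete intersection is also too restrictive: that characterizes generalized Golod rings of level~$2$, not the general class, and ``eventual vanishing of deviations'' characterizes complete intersections, not generalized Golod rings.

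In Stage~2 you write the coefficientwise inequality $P^R_{M\otimes N}\preceq P^R_M\cdot P^R_N$ on the grounds that the tensor of minimal free resolutions is ``generally non-minimal.'' When $\Tor^R_{\ge 1}(M,N)=0$ the tensor complex is exact and its differential $d_F\otimes 1 \pm 1\otimes d_G$ still has entries in $\m$, so it \emph{is} a minimal free resolution of $M\otimes_R N$ and the inequality is in fact an equality. This matters: with only the inequality you cannot conclude, as you correctly note at the end, since a power series dominated by one with a double pole at $\rho$ need not itself have a double pole. With the equality and the ``at most one root in $(0,1)$ counting multiplicity'' fact from Stage~1, the contradiction is immediate: $\rho = 1/\curv_R k$ would be a simple pole of both $P^R_M$ and $P^R_N$, hence a double pole of $P^R_{M\otimes N}$, which is impossible since $d_R(t)P^R_{M\otimes N}(t)\in\ZZ[t]$.
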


Example \ref{m134} shows that Theorem \ref{inmain} is optimal. As an immediate consequence, we prove that if $R$ is a generalized Golod ring and 
$P^R_k(t)$ does not have a pole at $1$, then $R$ satisfies Tor-vanishing (see Corollary \ref{m10}). 
We find several new rings satisfying the Tor-vanishing property. 
A result of Rossi and \c{S}ega \cite{rossi2014poincare} implies that compressed Artinian Gorenstein local rings of socle degree $s \neq 3$ are generalized Golod rings. Our interest in such rings comes from the fact that generic Gorenstein local rings are compressed \cite{iarrobino1984compressed}. We establish the Tor-vanishing property of such rings in Theorem \ref{AS01}. In the same theorem, we show that certain determinantal ideals of full codimension enjoy the Tor-vanishing property. William Teter \cite{teter1974rings} studied rings of the form $R/ \soc R$  for some Artinian Gorenstein local ring $R$. In literature, such rings are known as Teter rings. We validate the Tor-vanishing of generalized Golod Teter rings in Proposition \ref{cg}. 

Suppose $S$ and $T$ are Artinian local rings with common residue field $k$. The nontrivial fiber product $S \times_k T$ is never Gorenstein even if $S$ and $T$ are so. In \cite{ananthnarayan2012connected}, Ananthnarayan, H. et al. proposed a new construction of Gorenstein rings, namely the connected sums. When $S$ and $T$ are Gorenstein, their connected sum $S \# T$ is defined as the quotient of $S \times_k T$ by the difference $s - t$ of generators $s, t$ of respective socles of $S, T$. Naseeh and Wagstaff \cite{nasseh2017vanishing} proved that a nontrivial fiber product $S \times_k T$  satisfies Tor-vanishing. Analogously, we prove that the nontrivial connected sum $S \# T$ satisfies Tor-vanishing when $S$, $T$ are generalized Golod Artinian Gorenstein local rings (see Theorem \ref{240603}). As an application, we show that if $R$ is an Artinian Gorenstein local ring with maximal ideal $\m$ and $\edim R \geq 5$, then $R$ satisfies Tor-vanishing if either $\m^4 = 0$ or multiplicity of $R$ is at most $11$ (see Theorem \ref{mult11}). Finally, we recover results about the Tor-vanishing property by various authors (see \ref{AS02}).

This paper is organized as follows. In \S-\ref{sec:en}, we recall some basic definitions and results which concern this article.  In \S-\ref{sec:en1}, we prove Theorem \ref{inmain}. A crucial step is to prove that the curvature of a module over a generalized Golod ring lies in $\{0, 1, \curv_R k \}$. The proof uses a classical theorem in real analysis recorded in the book by Titchmarsh \cite[Theorem 7.21]{MR3728294}. This validates a question of Avramov \cite[Problem 4.3.8]{avramov1998infinite} as to if the set $\{ \curv_R M: $ M is a finite $R$-module $\}$ is finite, in the special case when $R$ is a generalized Golod ring; albeit the answer, in general, is negative due to Roos \cite[Corollary 4.5]{roos2003modules}. In the final section \S-\ref{sec:en2}, we provide new examples of rings satisfying the Tor-vanishing property.

{\bf All rings in this article are Noetherian, local with $1\not=0$. Modules are assumed to be finitely generated. The expression ``local ring $(R,\m, k)$" denotes a commutative Noetherian local ring $R$ with maximal ideal $\m$ and residue field $k = R/\m$. When information on the residue field is not necessary, we abbreviate $(R,\m, k)$ as $(R, \m)$.}

\section{Preliminaries}{\label{sec:en}}
In this section, we recall some important definitions and results which we use in the sequel. For unexplained definitions and details we refer the reader to the text by Avramov \cite{avramov1998infinite}.

\begin{definition}{\rm(Betti numbers, Poincar\'e series)}
Let $M$ be an $R$-module. For $n \geq 0$, the $n$-th Betti number $\beta_n^R (M)$ of $M$ is the rank of the free module $F_n$ in the minimal free resolution $F_{*} \twoheadrightarrow M$. It is well known that $\beta_n^R (M) = \dim_k {\Tor}^R_n(k, M)$.

The Poincar\'e series of  $M$ is the generating function for the Betti sequence $\{\beta_n^R (M)\}$, which is defined as $P^R_M(t) = \sum_{n \geq 0} \beta_n^R (M) t^n \in \ZZ[|t|].$
\end{definition}

\begin{definition}{\rm(Golod homomorphisms, Golod rings)}
Let $\phi: R \rightarrow S$ be a surjective homomorphism of local rings with common residue field $k$. From the change of rings spectral sequence for Tor, one gets a term-wise inequality of power series:  $P^S_k(t) \prec \frac{P^R_k(t)}{1 - t(P^R_S(t) -1)}.$
The homomorphism $\phi$ is called a Golod homomorphism if the above inequality is equality, i.e.
\[P^S_k(t) = \frac{P^R_k(t)}{1 - t(P^R_S(t) -1)}.\]

A  local ring $R$ is called a Golod ring if the minimal Cohen presentation $\eta: Q \twoheadrightarrow \hat{R}$ is a Golod homomorphism. Let $K^R$ denote the Koszul complex of $R$ on a minimal set of generators of $\m$. If $R$ is a Golod ring and $e = \dim_k \m/ \m^2$, then one has 
$P^R_k(t) = \frac{(1 + t)^e}{1 - \sum_{i = 1}^e \dim_k H_i(K^R) t^{ i + 1}}$.
\end{definition}

Asymptotic growth of Betti numbers $\{\beta^R_n(M)\}$ is often measured by the reciprocal of the radius of convergence of the Poincar\'e series $P^R_M(t)$ as follows.
\begin{definition}{\rm(Curvature of Modules)}\label{curv}
Let $M$ be an $R$-module. Then the curvature of $M$ is defined as
$ \curv{_R} M = \limsup\limits_{n \rightarrow \infty}  (\beta_n^R (M))^{1/n}.$
\end{definition}

If the projective dimension of $M$ is infinite, then it follows easily that $\curv_R(M) \geq 1$. 
Now we recall the definition of trivial Massey operations \cite[Definition 1.1]{levin1976lectures} which is imperative for the definition of generalized Golod rings.

\begin{definition}{\rm (Trivial Massey operations)}
Let $U$ be a DG algebra. Set $\overline{a}=(-1)^{d + 1}a$ for $a \in U_d$. 
The DG algebra $U$ is said to have a trivial Massey operation if for each sequence $v_1, v_2, \ldots, v_n$ of elements of $\widetilde{H}(U) = \ker (H(U) \rightarrow k)$, there is an element $\gamma(v_1, v_2, \ldots, v_n) \in IU = \ker (U \rightarrow k)$ such that the following hold.

\begin{enumerate}
\item For any $v \in \widetilde{H}(U),$ $\gamma(v)$ is a cycle representing the homology class of $v.$
\item For any $v_1, v_2, \ldots, v_n \in \widetilde{H}(U)$,
$$d \gamma(v_1, v_2, \ldots, v_n) = \sum_{k=1}^{n-1} \overline{\gamma(v_1, \ldots, v_k)} \gamma (v_{k+1}, \ldots, v_n),$$
\end{enumerate}

\end{definition}

\par
\textit{Tate} \cite{tate1957homology} constructed a DG algebra resolution $R \langle X \rangle$ of $k$ over R  by an iterated method of killing cycles by adjoining a set X of exterior variables of odd degrees and divided power variables of even degrees starting from the surjection $R \twoheadrightarrow k$. Subsequently, Gulliksen \cite[Corollary 1.6.4]{gulliksen1969homology} proved that under some extra hypothesis such a resolution becomes minimal and is unique up to isomorphism of DG R-algebras. Following Gulliksen \cite{gulliksen1969homology}, we call such a DG algebra resolution $R \langle X \rangle$ as the acyclic closure $k$ over $R$.

Avramov defined a local ring to be generalized Golod if its homotopy Lie algebra contains a free graded Lie subalgebra of finite codimension \cite[\S 1.7]{avramov1994local}. The following is an equivalent definition.
\begin{definition}{\rm (Generalized Golod rings)}
A local ring $R$ is called generalized Golod (of level $\leq s$) if the DG algebra $R \langle X_{ \leq s} \rangle$ admits a trivial Massey operation, where $R \langle X_{ \leq s} \rangle$ is the DG $R$-subalgebra of the acyclic closure $R\langle X \rangle$ generated by variables of degree at most $s$.
\end{definition}
 The following theorem due to Avramov \cite[\S 1.4]{avramov1994local} proves that finitely generated modules over a generalized Golod ring have rational Poincar\'e series sharing a common denominator.
 
\begin{theorem}{\label{p1}}
Let $M$ be a finitely generated module over a generalized Golod ring $R$  (of level $\leq q$) and $U = R \langle X_i : \deg X_i \leq q \rangle $, then the following hold :
\begin{enumerate}
\item
The series $h(t) = \sum_{i=0}^\infty \dim _k H_i(U)t^i$ is a rational function and 
$h(t)= \frac{u(t)}{v(t)}$ for some $u(t) \in \ZZ[t]$ and $v(t) = \Pi_{0 \leq 2i \leq q} (1-t^{2i})^{e_{2i}}.$

\item
$P_M^R(t) = \frac{p_M(t)}{d_R(t)}$ for  $p_M(t), d_R(t) \in \ZZ[t]$ where
$$d_R(t) = v(t)(1+t-t \cdot h(t)) = (1+t)v(t)-t u(t).$$

\item
$p_k(t) = P^R_k(t) d_R(t) = \Pi_{0 \leq 2i + 1 \leq q} (1 + t^{2i + 1})^{e_{2i + 1}}$, where $e_{2i + 1}$ denotes the number of exterior variables of degree $2i + 1$. 
\end{enumerate}
\end{theorem}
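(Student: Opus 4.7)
The plan is to exploit the trivial Massey operation on $U$ to extend $U$ to a minimal free DG algebra resolution $V$ of $k$ over $R$---which, by Gulliksen's uniqueness of the acyclic closure, must then coincide with $R\langle X\rangle$---and to read off all three statements from Hilbert-series bookkeeping.

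As a graded-free $R$-module with exterior generators in odd degrees and divided-power generators in even degrees, $U$ has
$$\mathrm{Hilb}_U(t) \;=\; \frac{\prod_{2i+1 \leq q}(1+t^{2i+1})^{e_{2i+1}}}{\prod_{0 < 2i \leq q}(1-t^{2i})^{e_{2i}}} \;=\; \frac{p_k(t)}{v(t)}.$$
Mimicking Golod's classical construction on the Koszul complex, I would use $\gamma$ to adjoin to $U$ free graded-commutative generators indexed by ``words'' $v_1\cdots v_n$ in a fixed $k$-basis of $\widetilde H(U)$, with degrees shifted up by $1$ and differentials prescribed inductively by the splitting formula $\sum_k \overline{\gamma(v_1,\ldots,v_k)}\,\gamma(v_{k+1},\ldots,v_n)$. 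The defining identity of a trivial Massey operation is exactly what guarantees $d^2=0$ and what lets one build a contracting homotopy filtered by word length, so the resulting DG extension $V$ is acyclic in positive degrees and is therefore a minimal $R$-free resolution of $k$. A standard generating-function count then gives
$$P_k^R(t) \;=\; \mathrm{Hilb}_V(t) \;=\; \frac{\mathrm{Hilb}_U(t)}{1-t(h(t)-1)} \;=\; \frac{p_k(t)}{v(t)(1+t-th(t))} \;=\; \frac{p_k(t)}{d_R(t)}.$$
This establishes (3), and setting $u(t) := v(t)h(t)$ gives $d_R(t) = (1+t)v(t) - tu(t)$. Since $P_k^R(t)$ is rational (visible from the displayed formula) and $p_k(t), d_R(t)$ lie in $\ZZ[t]$, one deduces $u(t) \in \ZZ[t]$ and $h(t) = u(t)/v(t)$, proving (1).

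For (2), given an $R$-module $M$, let $F \to M$ be the minimal free resolution and view $F \otimes_R V$ as a semifree DG $V$-module. The same Massey-driven bookkeeping---equivalently, the collapse of the change-of-rings spectral sequence for the augmentation $V \twoheadrightarrow k$ under the trivial Massey hypothesis---produces a polynomial $p_M(t) \in \ZZ[t]$ such that $P_M^R(t) = p_M(t)/d_R(t)$, with numerator encoding the finite-dimensional datum $\mathrm{Tor}^R_*(M,k)$ assembled through the $V$-structure.

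\textbf{Main obstacle.} The crux is the construction of $V$ in the middle step: one must define differentials on infinitely many new generators out of the higher Massey products $\gamma(v_1,\ldots,v_n)$ and verify acyclicity by an induction on word length, faithful to Golod's original argument but carried out in the broader divided-power setting. The passage to arbitrary $M$ in (2) is likewise delicate, because the $V$-module structure on $F \otimes_R V$ must be made compatible with the Massey-defined differentials; the cleanest way to organize this is through the spectral-sequence collapse indicated above.
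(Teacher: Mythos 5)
The paper does not prove Theorem~\ref{p1}; it is cited directly to Avramov~\cite[\S 1.4]{avramov1994local} as a known result underpinning the notion of a generalized Golod ring. So there is no in-paper argument to compare against; you are effectively reconstructing Avramov's proof, and your high-level plan (a Golod-style extension of $U$ driven by the trivial Massey operation, followed by Hilbert-series bookkeeping) is indeed the morally correct approach. That said, there are real gaps in the sketch as written.

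First, the extension $V$ you build is not a DG algebra: the classical Golod construction over a trivial Massey operation adjoins a free $R$-module on noncommutative ``words'' $[v_{i_1}|\cdots|v_{i_n}]$ (the bar-type generators that make the geometric-series count $\frac{1}{1-t(h(t)-1)}$ come out), and the resulting complex is a DG $U$-module, not a DG $R$-algebra. So the appeal to Gulliksen's uniqueness of the acyclic closure is misplaced; $V$ need not coincide with $R\langle X\rangle$ as an algebra. The conclusion you actually need---that $V$ has the same ranks as the minimal resolution---follows from uniqueness of minimal free resolutions \emph{as complexes}, which is a weaker and correct statement, but you should say that rather than invoking Gulliksen. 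Relatedly, minimality of $V$ requires the observation that the trivial Massey operation takes values in $IU$, so the new differentials land in $\m V$; this is the point of condition (2) in the definition and deserves to be made explicit.

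Second, and more seriously, the argument for part (1) is circular. You set $u(t) := v(t)h(t)$ and then deduce $u(t)\in\ZZ[t]$ from ``$d_R(t)\in\ZZ[t]$''; but $d_R(t)=(1+t)v(t)-tu(t)$ is a polynomial precisely if $u(t)$ is, and the rationality of $P_k^R(t)$ is likewise ``visible from the displayed formula'' only \emph{after} one knows $h(t)$ is rational. The rationality of $h(t)$ with denominator dividing $v(t)=\prod_{0<2i\leq q}(1-t^{2i})^{e_{2i}}$ needs an independent structural input. In Avramov's treatment this comes from the fact that $H_*(U)$ is a Noetherian module over a finitely generated polynomial algebra built from the even-degree (divided-power) variables, whence Hilbert--Serre gives $h(t)=u(t)/v(t)$ with $u(t)\in\ZZ[t]$. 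Without that input, items (1) and (3) cannot both be pinned down from the Hilbert-series identity alone. Finally, part (2) is only gestured at; the passage from $k$ to arbitrary $M$ requires a genuine argument (a change-of-rings/Eilenberg--Moore comparison over $U$ together with finiteness of $\operatorname{Tor}^U_*(M,k)$ in the appropriate sense), and ``Massey-driven bookkeeping'' does not substitute for it.
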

If a ring $R$ has the property that it is a homomorphic image of a complete intersection ring under a Golod homomorphism, then $R$ is a generalized Golod ring of level 2 \cite[\S 1.8(3)]{avramov1994local}. Henceforth, if $R$ is a generalized Golod ring, then $d_R(t)$ denotes the polynomial as in Theorem \ref{p1}.

\section{Main Results}{\label{sec:en1}}
We begin by reminding the reader of the following result which appears in a book by Titchmarsh \cite[Theorem 7.21]{MR3728294}.

\begin{theorem}{\label{m2}}
Let $\frac{f(t)}{g(t)} = \sum_{i \geq 0} a_n t^n$ be a power series expansion of a rational function $\frac{f(t)}{g(t)}$ in $\RR[|t|]$ such that $a_n \geq 0$ for all $n \geq 0$. Then the radius of convergence $R$ is a singular point of $\frac{f(t)}{g(t)}$, i.e. $\lim_{t \rightarrow R}\frac{f(t)}{g(t)} = \infty$.
\end{theorem}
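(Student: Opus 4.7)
The plan is to reduce the theorem to Pringsheim's classical result---that a power series with non-negative real coefficients must have a singularity at the positive real point on its circle of convergence---and then exploit the fact that the singularities of a rational function are exactly its poles. In our setting, this combination immediately yields the claimed blow-up.

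First I would establish the reformulation: \emph{if $\sum_{n \geq 0} a_n t^n$ has radius of convergence $R \in (0,\infty)$ with all $a_n \geq 0$, then $t = R$ is a singular point of the sum.} I would argue by contradiction, assuming $f/g$ extends analytically across $t = R$. Fix some $t_0 \in (0, R)$ and re-expand the function as a power series centered at $t_0$,
\[
\frac{f(t)}{g(t)} \;=\; \sum_{k \geq 0} b_k \, (t - t_0)^k, \qquad b_k \;=\; \sum_{n \geq k} \binom{n}{k} a_n \, t_0^{\,n-k}.
\]
Non-negativity of the $a_n$ forces $b_k \geq 0$. Under the hypothetical extension, the radius of convergence of this re-expanded series strictly exceeds $R - t_0$, so it would converge at some real $t_1 > R$. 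Interchanging the order of the non-negative double summation (justified by Tonelli's theorem) would then yield convergence of $\sum_n a_n t_1^n$, contradicting the definition of $R$ as the radius of convergence.

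Second, I would upgrade ``singular point'' to ``$\lim_{t \to R} f(t)/g(t) = \infty$''. Since $f/g$ is rational, its singularities in $\CC$ are precisely the poles---the zeros of $g$ that are not cancelled by $f$. Hence $R$ must be such a pole, so $|f(z)/g(z)| \to \infty$ as $z \to R$ in $\CC$, and restricting to the real axis gives the stated limit. Monotonicity of $t \mapsto \sum a_n t^n$ on $[0, R)$, which follows from $a_n \geq 0$, already guarantees that the one-sided limit $\lim_{t \to R^-}$ exists in $[0, +\infty]$, and the pole assertion pins it down to $+\infty$.

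The main subtlety will be the summation interchange in the first step, handled cleanly by non-negativity via Tonelli; everything else reduces to the standard fact that the only singularities of a rational function are poles, at which the function blows up.
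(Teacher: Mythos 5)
The paper gives no proof of Theorem \ref{m2}: it is quoted from Titchmarsh \cite[Theorem 7.21]{MR3728294} and taken as a known classical fact, so there is no internal argument to compare against. Your proposal supplies a correct proof, and it is the natural one: the classical Pringsheim--Vivanti theorem (a power series with non-negative real coefficients has a singularity at the positive real point of its circle of convergence), combined with the observation that the only singularities of a rational function are its poles, at which the modulus necessarily tends to infinity. The one step worth spelling out is why, under the hypothetical analytic extension past $R$, the re-expansion at $t_0 \in (0,R)$ has radius of convergence strictly greater than $R - t_0$. For a rational function this is clean: the radius at $t_0$ is the distance from $t_0$ to the nearest pole, every pole has modulus at least $R$ (since the radius at $0$ is $R$), and for real $t_0 \in (0,R)$ the unique point of modulus $\geq R$ at distance exactly $R - t_0$ from $t_0$ is $R$ itself. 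Since $R$ is assumed regular (not a pole) and the pole set is finite, the infimum is attained at some other pole and is strictly larger than $R - t_0$. With that made explicit, the Tonelli interchange is routine, and your concluding remark that monotonicity of $t \mapsto \sum a_n t^n$ on $[0,R)$ pins the one-sided real limit to $+\infty$ completes the argument.
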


The result below shows that if the Poincar\'e series of a module over a generalized Golod ring has a pole in $(0, 1)$, then the pole is unique and its order is at most one.
\begin{lemma}{\label{m4}}
Let $R$ be a generalized Golod ring. Then $d_R(t)$ has at most one root counting multiplicities in the interval $(0, 1)$.
\end{lemma}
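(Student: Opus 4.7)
The plan is to apply the explicit factorization
\[
d_R(t) \;=\; v(t)\bigl(1 + t - t\,h(t)\bigr) \;=\; v(t)\cdot\phi(t)
\]
supplied by Theorem \ref{p1}(2), and analyze each factor separately on the open interval $(0,1)$.

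The first step is to observe that $v(t) = \prod_{1 \leq i,\; 2i \leq q}(1 - t^{2i})^{e_{2i}}$ is a finite product of terms of the form $(1 - t^{2i})^{e_{2i}}$, each of which is strictly positive on $(0,1)$ since $1 - t^{2i} > 0$ there. Hence $v$ contributes no roots in $(0,1)$, and the problem reduces to showing that $\phi(t) := 1 + t - t\,h(t)$ has at most one root (counted with multiplicity) in $(0,1)$.

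The key observation for $\phi$ is that $h(0) = \dim_k H_0(U) = 1$, because for the DG subalgebra $U = R\langle X_{\leq q}\rangle$ of the acyclic closure we have $U_0 = R$ and $dU_1 = \m$, so $H_0(U) \cong k$. Using $h(t) = 1 + \sum_{i \geq 1}\dim_k H_i(U)\,t^i$, I can rewrite
\[
\phi(t) \;=\; 1 \;-\; \sum_{i \geq 1}\dim_k H_i(U)\,t^{i+1}.
\]
All non-constant coefficients of $\phi$ are thus non-positive, so
\[
\phi'(t) \;=\; -\sum_{i \geq 1}(i+1)\,\dim_k H_i(U)\,t^i \;\leq\; 0 \quad \text{on } (0,1).
\]
If $\dim_k H_i(U) = 0$ for all $i \geq 1$, then $\phi \equiv 1$ has no roots and the lemma is trivial; otherwise $\phi'(t) < 0$ throughout $(0,1)$, so $\phi$ is strictly decreasing on $[0,1]$. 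Since $\phi(0) = 1 > 0$, this forces $\phi$ to have at most one root $\rho$ in $(0,1)$, and $\phi'(\rho) < 0$ guarantees that $\rho$ is a simple root. Multiplying back by $v$, the same conclusion holds for $d_R$.

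I do not anticipate any real obstacle: once one unpacks Theorem \ref{p1}, the argument reduces to elementary sign analysis for a polynomial whose non-constant coefficients are non-positive. In particular, Titchmarsh's theorem (Theorem \ref{m2}) recalled just above is \emph{not} required for this lemma; I expect it to be invoked in the next step of the paper to identify the unique root of $d_R$ in $(0,1)$ with $1/\curv_R k$ via the non-negativity of the Betti coefficients of $P_k^R(t) = p_k(t)/d_R(t)$.
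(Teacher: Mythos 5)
Your proof is correct and follows essentially the same route as the paper's: factor $d_R(t) = v(t)\,w(t)$ via Theorem \ref{p1}, note $v(t) > 0$ on $(0,1)$, rewrite $w(t)$ using $H_0(U) = k$ as $1$ minus a power series with non-negative coefficients, and conclude $w$ is strictly decreasing on $(0,1)$ (hence has at most one root there, necessarily simple). Your observation that Theorem \ref{m2} (Titchmarsh) is not needed here but is used afterward to locate the pole is also accurate; the only cosmetic difference is that the paper additionally invokes $H_i(U)=0$ for $0<i<q$, which you don't need since $H_0(U)=k$ already gives the same rearrangement of $w$.
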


\begin{proof}
We use notations as set in Theorem \ref{p1}.
Let $w(t) = 1+ t - th(t)$, then $d_R(t) = v(t) w(t)$. Since $v(t) = \Pi_{0 \leq 2i \leq q} (1-t^{2i})^{e_{2i}}$ does not have a root in $(0, 1)$, we are through if we show that $w(t)$ has at most one root in $(0, 1)$ counting multiplicities. 

Now $U = R\langle X_{\leq q} \rangle$, by the property of the acyclic closure $R\langle X \rangle $ of $k$ over $R$, we have  $H_0(U) = k$ and $H_i(U) = 0$ for $0 < i < q$. Therefore we have 
\begin{align*}
w(t) = 1+ t - th(t) = 1+ t - t \frac{u(t)}{v(t)} = 1 + t - t \sum_{i=0}^{\infty} \dim _k H_i(U)t^i  = 1 - \sum_{i = q}^{\infty} \dim _k H_i(U)t^{i + 1}
\end{align*}

If $H_i(U) = 0$ for all $i \geq q$, then $w(t) = 1$, so $w(t)$ does not have a root in $(0, 1)$. Therefore we only need to consider the case when $H_i(U) \neq 0$ for some $i = p \geq q$. Since $v(t) = \Pi_{0 \leq 2i \leq q} (1-t^{2i})^{e_{2i}}$, the radius of convergence of $w(t)$ is $1$. Differentiating both sides with respect to $t$, we get 
\[w'(t) = - \sum_{i = q}^{\infty} (i + 1)\dim _k H_i(U)t^{i}.\]
Since $H_p(U) \neq 0$, we have $w'(c) = - \sum_{i = q}^{\infty} (i + 1)\dim _k H_i(U)c^{i}< 0$ for all $c \in (0, 1)$ \cite[Theorem 9.4.10]{bartle2000introduction}. This implies that $w(t)$ is a strictly decreasing continuous function in the interval $(0, 1)$. Therefore $w(t)$ has at most one root in $(0, 1)$ counting multiplicities and the proof follows.
\end{proof}

The following lemma gives an affirmative answer to a problem posed by Avramov \cite[Problem 4.3.8]{avramov1998infinite} for generalized Golod rings.

\begin{lemma}{\label{m5}}
Let $M$ be a module over a generalized Golod ring $R$, then 
${\curv}_R M \in \{0, 1, {\curv}_R k\}.$
\end{lemma}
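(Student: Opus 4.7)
The plan is to locate the smallest positive real pole of the rational function $P_M^R(t)$ using Lemma \ref{m4} together with Pringsheim's theorem (Theorem \ref{m2}), and then match this pole with the one governing $\curv_R k$. If $\pd_R M < \infty$, then $P_M^R(t)$ is a polynomial and $\curv_R M = 0$, so I may assume $\pd_R M = \infty$, which forces $\curv_R M \geq 1$.

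By Theorem \ref{p1}, write $P_M^R(t) = p_M(t)/d_R(t)$ and reduce to lowest terms as $q_M(t)/e_M(t)$ with $e_M(t) \mid d_R(t)$. Since $P_M^R(t)$ has nonnegative coefficients, Theorem \ref{m2} implies that its radius of convergence $\rho_M = 1/\curv_R M$ is a singular point of $q_M/e_M$, hence a pole on the positive real axis. Therefore $\rho_M$ is the least positive real root of $e_M(t)$, and in particular a root of $d_R(t)$.

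Now I invoke Lemma \ref{m4}: $d_R(t)$ has at most one root in $(0,1)$, counted with multiplicities. If there is no such root, then $e_M(t)$ has no root in $(0,1)$, so $\rho_M \geq 1$ and $\curv_R M = 1$. Otherwise there is a unique simple root $\alpha \in (0,1)$ of $d_R(t)$, and either $e_M(\alpha) = 0$, in which case $\rho_M = \alpha$ and $\curv_R M = 1/\alpha$, or $e_M(\alpha) \neq 0$, in which case the least positive real root of $e_M(t)$ lies in $[1, \infty)$ and so $\curv_R M = 1$.

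To finish, I identify $1/\alpha$ with $\curv_R k$ whenever $\alpha$ exists. By Theorem \ref{p1}(3), $p_k(t) = \prod_i (1+t^{2i+1})^{e_{2i+1}}$ is strictly positive for $t > 0$, hence has no positive real root. So no cancellation occurs at $\alpha$ in $P_k^R(t) = p_k(t)/d_R(t)$, and $\alpha$ remains a pole of $P_k^R(t)$. A second application of Pringsheim then gives $\curv_R k = 1/\alpha$, and combining the cases yields $\curv_R M \in \{0, 1, \curv_R k\}$. The main obstacle I anticipate is the careful bookkeeping around reducing to lowest terms: one must verify that $e_M(t)$ inherits the divisibility $e_M \mid d_R$ and that the singularity extracted by Pringsheim is genuinely the least positive real root of $e_M(t)$, rather than being cancelled against $q_M(t)$ or merely lying somewhere on the complex circle of radius $\rho_M$.
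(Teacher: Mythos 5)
Your proof is correct, and it differs from the paper's at exactly one step, namely in how the pole $\alpha$ is tied to $\curv_R k$. Both arguments share the same backbone: Theorem \ref{p1} supplies the common denominator $d_R$, Pringsheim's theorem (Theorem \ref{m2}) forces the radius of convergence of each Poincar\'e series to sit at a positive real root of $d_R$, and Lemma \ref{m4} restricts $d_R$ to at most one such root in $(0,1)$. The paper then dispatches the complete-intersection case and identifies $1/\curv_R k$ as a root of $d_R$ in $(0,1)$ by citing Avramov's characterization that $R$ is a complete intersection if and only if $\curv_R k \leq 1$ (Corollary~8.2.2 of \cite{avramov1998infinite}). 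You avoid that external input entirely: from Theorem \ref{p1}(3), $p_k(t) = \prod(1+t^{2i+1})^{e_{2i+1}}$ is strictly positive on $(0,\infty)$, so the unique root $\alpha \in (0,1)$ of $d_R$ (when it exists) cannot cancel in $P_k^R = p_k/d_R$, and a second application of Pringsheim together with the uniqueness in Lemma \ref{m4} forces $\curv_R k = 1/\alpha$. Your route is therefore a bit more self-contained, trading a citation to a structural theorem about complete intersections for a direct use of the explicit numerator already in hand, and it also makes the lowest-terms bookkeeping explicit, which the paper leaves implicit. Both are sound; the paper's version is shorter because it leans on the cited theorem, while yours is slightly more elementary.
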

\begin{proof}
The curvature of a module $M$ is $0$ if and only if the projective dimension of $M$ is finite. On the other hand, 
if $R$ is a complete intersection ring, then every module of infinite projective dimension has curvature equal to $1$ \cite[Corollary 8.2.2]{avramov1998infinite}, so the only case of interest is when $R$ is not a complete intersection ring.

Let ${\curv}_R M \notin \{0, 1\}$, then ${\curv}_R M > 1$. The radius of convergence of $P^R_k(t)$ is $\frac{1}{\curv_R k}$. By Theorem \ref{m2}, $\frac{1}{\curv_R k}$ is a root of $d_R(t)$. In the same vain, $\frac{1}{\curv_R M}$ is also a root of $d_R(t)$.

Since $R$ is not a complete intersection, we have $\curv_R k > 1$ \cite[Corollary 8.2.2]{avramov1998infinite}. Therefore both $\frac{1}{\curv_R k}, \frac{1}{\curv_R M}$ are roots of $d_R(t)$ in the interval $(0, 1)$. The result is followed by Lemma \ref{m4}.
\end{proof}

\begin{theorem}{\label{m8}}
Let $R$ be a generalized Golod ring and  $M$, $N$ be modules over $R$ such that $\Tor_i^R (M,N) = 0$ for $i \gg 0.$ Then either $\curv_R M$ or $\curv_R N$ is in the set $\{ 0, 1 \}.$
\end{theorem}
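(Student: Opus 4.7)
The plan is to combine Lemma \ref{m5} with an order-of-pole analysis of Poincar\'e series at $r := 1/\curv_R k$, using the common denominator $d_R(t)$ from Theorem \ref{p1}. The complete intersection case is immediate: by \cite[Corollary 8.2.2]{avramov1998infinite} every module then has curvature at most $1$, so the conclusion holds automatically. Accordingly I assume $R$ is not a complete intersection, so $\curv_R k > 1$ and $r \in (0,1)$.

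First I would reduce to the case $\Tor_i^R(M, N) = 0$ for \emph{all} $i \geq 1$ by replacing $M$ with a sufficiently high syzygy $\syz^R_s M$; curvature is preserved under syzygies. Assuming the conclusion fails, Lemma \ref{m5} then forces $\curv_R M = \curv_R N = \curv_R k$.

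Next I would exploit the Tor-vanishing to obtain the multiplicative identity
\[
P^R_{M \otimes_R N}(t) \;=\; P^R_M(t)\, P^R_N(t).
\]
The clean way to see this: the totalization of $F_\bullet \otimes_R G_\bullet$ (for minimal free resolutions of $M$ and $N$) is a free resolution of $M \otimes_R N$, and applying $k \otimes_R -$ kills every differential by minimality, so $\beta_n^R(M \otimes_R N) = \sum_{i+j=n}\beta_i^R(M)\,\beta_j^R(N)$.

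The decisive step is then a pole-order clash at $r$. By Theorem \ref{m2}, $r$ is a singular point of $P^R_k(t)$, hence a root of $d_R(t)$; by Lemma \ref{m4}, it is the unique root of $d_R$ in $(0,1)$ and is simple. Writing $P^R_M(t) = p_M(t)/d_R(t)$ as in Theorem \ref{p1}, the hypothesis $\curv_R M = \curv_R k$ forces $p_M(r) \neq 0$ (else the factor would cancel, the radius of convergence of $P^R_M$ would strictly exceed $r$, and Lemma \ref{m5} would place $\curv_R M$ in $\{0,1\}$). So $P^R_M$ and $P^R_N$ each have a genuine simple pole at $r$, and their product has a pole of order exactly $2$ there. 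But $P^R_{M \otimes_R N}(t) = p_{M \otimes N}(t)/d_R(t)$ has pole of order at most $1$ at $r$, a contradiction.

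The main obstacle I expect is establishing that both poles are genuinely simple (and non-canceling), which is exactly where Lemma \ref{m5} is essential and where the rigidity of the denominator structure afforded by Lemma \ref{m4} pays off; the remaining reductions (to Tor-vanishing in all positive degrees, and to the non-CI case) and the Poincar\'e series product formula are routine.
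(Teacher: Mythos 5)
Your proposal is correct and follows essentially the same route as the paper: dispose of the complete intersection case, replace modules by high syzygies to get $\Tor_i^R(M,N)=0$ for all $i\geq 1$, use the product formula $P^R_{M\otimes_R N}(t)=P^R_M(t)P^R_N(t)$, invoke Lemma~\ref{m5} to force $\curv_R M=\curv_R N=\curv_R k$, and derive a contradiction from a double pole of $P^R_{M\otimes_R N}$ at $1/\curv_R k$ against the simple-pole bound from Lemma~\ref{m4} and Theorem~\ref{m2}. Your treatment is slightly more explicit than the paper's at the step showing $p_M(r)\neq 0$ (i.e.\ that the pole is not cancelled by the numerator), but the underlying argument is identical.
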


\begin{proof}
If $R$ is a complete intersection, then the curvature of any module is either $0$ or $1$, so nothing remains to prove. We consider the case when $R$ is not a complete intersection, i.e. $\curv_R k > 1$. Replacing $M$ and $N$ by sufficiently higher syzygies, it is enough to assume that $\Tor_i^R(M, N) = 0 \ \text{for} \ i \geq 1.$

Let $F. \twoheadrightarrow M$ and $G. \twoheadrightarrow N$ be minimal free resolutions of $M$ and $N$ respectively. Then $F. \otimes_R G. \twoheadrightarrow M \otimes_R N$ is the minimal free resolution of $M \otimes_R N$ over $R$, consequently we have $P_{M \otimes_R N}^R (t)= P_M^R(t) P_N^R(t).$

If possible we assume that both $\curv_R M$ and $\curv_R N$ are not in $\{0, 1\}$, then by Lemma \ref{m5} we have 
$${\curv}_R M = {\curv}_R N = {\curv}_R k > 1.$$

Let $\rho = \frac{1}{{\curv}_R k}$.
Then $\rho$ is a common simple pole of both $P_M^R(t)$ and $P_N^R(t)$  in 
$(0, 1)$, see Theorem \ref{m2}. It follows that $\rho \in (0, 1)$ is a pole of $P_{M \otimes_R N}^R (t)$ of order 2, which contradicts Lemma \ref{m4}. Hence, the proof follows.
\end{proof}

\begin{remark}\label{MotPeeva}
Let $(R,\m)$ be an Artinian local ring and $M$ be an $R$-module of infinite projective dimension. Let $c =  \edim(R) - \dim(R) \geq 3$, $e= \text{multiplicity of} \ R$,  and $\ell= \min \{ i : \m^{i} = 0 \}$. If $e \leq 2c+\ell-4$, then it follows from a result of  Peeva \cite[Proposition 2]{peeva1998exponential}, that  $\curv_R(M) > 1$. If $e = 2c+\ell-3$, then the same result implies that if  $\curv_R(M) = 1$ then $\cx_R(M) \leq 1$. 

In light of the above fact, a standard argument on Artinian reduction \cite[Remark 1.7]{csega2003vanishing} and Theorem \ref{m8} readily imply the result of Monta\~{n}o and Lyle \cite[Theorem 4.3]{lyle2020extremal}.
\end{remark}

\begin{corollary}{\label{m10}}
Let $R$ be a generalized Golod ring. If $d_R(1) \neq 0$, then
$R$ satisfies Tor-vanishing. 
\end{corollary}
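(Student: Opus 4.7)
The plan is short: I would invoke Theorem \ref{m8} to reduce to a statement about a single module, then use Pringsheim's theorem (Theorem \ref{m2}) together with the rationality of Poincar\'e series over a generalized Golod ring (Theorem \ref{p1}(2)) to eliminate the remaining case.

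Explicitly, I would suppose $\Tor_i^R(M,N) = 0$ for $i \gg 0$, and apply Theorem \ref{m8} to conclude that at least one of $\curv_R M$, $\curv_R N$ lies in $\{0,1\}$; say $\curv_R M \in \{0,1\}$. If $\curv_R M = 0$ then $\pd_R M < \infty$ and we are done, so the only remaining case to rule out is $\curv_R M = 1$. Note that $\curv_R M = 1$ forces $\pd_R M = \infty$, since a module of finite projective dimension has an eventually zero Betti sequence and hence curvature zero.

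For this remaining case, Theorem \ref{p1}(2) provides $P_M^R(t) = p_M(t)/d_R(t)$ as a rational function in $\ZZ[|t|]$. By the definition of curvature, the radius of convergence of $P_M^R(t)$ is $1/\curv_R M = 1$. Since all Betti numbers are non-negative integers, Theorem \ref{m2} applies and asserts that $t = 1$ is a singular point of $P_M^R(t)$. A rational function has only poles as singularities, so $t = 1$ must be a pole of $p_M(t)/d_R(t)$, forcing $d_R(1) = 0$. This contradicts the hypothesis $d_R(1) \neq 0$, and the proof is complete.

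The argument contains no genuine obstacle once Theorem \ref{m8}, Theorem \ref{p1}, and Theorem \ref{m2} are in hand; the only conceptual point is to recognize that the condition $d_R(1) \neq 0$ is exactly what is needed to upgrade ``$\curv_R M \in \{0,1\}$'' to ``$\pd_R M < \infty$'' via Pringsheim's theorem applied at the boundary of the unit disc.
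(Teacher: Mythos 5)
Your proposal is correct and follows essentially the same route as the paper: the key mechanism in both is to use Pringsheim's theorem (Theorem \ref{m2}) together with the common denominator $d_R(t)$ from Theorem \ref{p1} to conclude that no module can have curvature exactly $1$ when $d_R(1) \neq 0$, and then invoke Theorem \ref{m8} to reduce to that case. The paper states this as a direct implication ($d_R(1)\neq 0 \Rightarrow \curv_R M \neq 1$ for all $M$, then combine with Theorem \ref{m8}), whereas you arrange it as a proof by contradiction, but the logical content and the ingredients used are identical.
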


\begin{proof}
If $d_R(1) \neq 0$, then for any finitely generated module  $M$ over $R$, we have $\curv_R M \neq 1$ by Theorem \ref{m2}. Therefore the corollary follows from Lemmas \ref{m5} and Theorem \ref{m8}.
\end{proof}

The following example delves into the diverse scenarios pertaining to Theorem \ref{m8}.
\begin{example}\label{m134}
We know that $\frac{\QQ[x,y]}{(x^2,xy,y^2)}$ is a Golod ring by \cite[Proposition 5.3.4(1)]{avramov1998infinite}, so the surjective map $p : \QQ[x, y] \twoheadrightarrow   \frac{\QQ[x,y]}{(x^2,xy,y^2)}$ is a Golod homomorphism. Let $T = \frac{\QQ[w, z]}{(w^2,z^2)}$, then $\phi  = T \otimes_{\QQ} p  : \frac{\QQ[w,x,y,z]}{(w^2,z^2)} \rightarrow \frac{\QQ[w,x,y,z]}{(w^2,x^2,xy,y^2,z^2)}$ is a Golod homomorphism. Let $ R = \frac{\QQ[w,x,y,z]}{(w^2,z^2)}$ and $S = \frac{\QQ[w,x,y,z]}{(w^2,x^2,xy,y^2,z^2)}$.

We observe that $S = \frac{\QQ[x,y]}{(x^2,xy,y^2)} \otimes_{\QQ} \frac{\QQ[w]}{(w^2)} \otimes_{\QQ} \frac{\QQ[z]}{(z^2)}$, therefore 
$$P^S_{\QQ}(t) = P^{\frac{\QQ[x,y]}{(x^2,xy,y^2)}}_{\QQ}(t)  P^{\frac{\QQ[w]}{(w^2)}}_{\QQ}(t)  P^{\frac{\QQ[z]}{(z^2)}}_{\QQ}(t)  = \frac{1}{(1 - 2t)(1 - t)^2}$$
The ring $S$ is a generalized Golod ring and $\curv_S\QQ = 2$. We observe that

\begin{enumerate}
\item If $M = \frac{S}{(z)}$, $N = \frac{S}{(w)}$, then $\curv_S M = \curv_S N = 1$ and $\Tor_i^R(M,N)=0$ for all $i \geq 1$.
\item If $M = \frac{S}{(z)}$, $N = \frac{S}{(x,y)}$, then  $\curv_S M = 1$, $\curv_S N = 2$ and $\Tor_i^R(M,N)=0$ for all $i \geq 1$.
\end{enumerate}
\end{example}

\begin{remark}{\label{m133}}
Let  $\phi \colon R \rightarrow S$ be a Golod homomorphism from a complete intersection $R$ onto a ring $S$ such that $\pd_R S > 1$. A module $M$ is called $\phi$-Golod if $P^S_{M} (t) = \frac{P^R_{M}(t)}{1-t(P^R_S(t)-1)}$. For a $\phi$-Golod module $M$, we have the following term-wise inequality of power series : 
\begin{align*}{\label{m131}}
P^S_{M} (t) = \frac{P^R_{M}(t)}{1-t(P^R_S(t)-1)} \succ \frac{1}{1-t(P^R_S(t)-1)} \succ \frac{1}{1-t^2 - t^3}
\end{align*}
The first inequality follows since all coefficients of $P^R_{M}(t)$ are positive and the latter follows because $P^R_S(t) \succ 1 + t + t^2$.
The polynomial $1-t^2 - t^3$ has a root $\gamma \in (0, 1)$, so $\curv_R(M) > \frac{1}{\gamma} > 1$. Note that $S$ is a generalized Golod ring. Therefore by Lemma \ref{m5}, we have 
$\curv_R M = \curv_R k$. More generally, if some higher syzygy $\syz^S_i M, i > 0$ is $\phi$-Golod, then we also have $\curv_R M = \curv_R k$.
\end{remark}

Lescot \cite{lescot1990series}, proved that sufficiently high syzygies of a module over a Golod ring are Golod modules. In Example \ref{m134} (1), we have $\curv_S \QQ \neq \curv_S M$. Therefore by Remark \ref{m133}, higher syzygies of $M$ can't be $\phi$-Golod modules. 
We conclude this section with the following question.

\begin{question}
Let $R$ be a local ring and  $M$, $N$ be modules over $R$ such that $\Tor_i^R (M,N) = 0$ for $i \gg 0.$ Is $\{\curv_R M, \curv_R N\} \cap \{0, 1\} \neq \emptyset$?.
\end{question}

\section{Applications}{\label{sec:en2}}

In this section, we establish the Tor-vanishing and Tor-persistence properties for several new classes of rings. The key is to use Corollary \ref{m10}. In the concluding subsection, we reclaim the Tor-vanishing property of rings, which was previously proved by other authors using different methods. Recall that the socle degree $s$ of an Artinian ring $(R, \m)$ is defined as $s = \max \{n : \m^n \neq 0\}$. In literature, socle degree is also called Loewy length. 

\begin{theorem}{\label{AS01}} 
Let $(R, \m, k)$ be a local ring, then $R$ satisfies Tor-vanishing in the following cases: 
\begin{enumerate}

\item $R$ is a compressed Artinian Gorenstein local ring of socle degree $s$ such that $2 \leq s \neq 3$. 

\item 
$R = A/I$ where $(A, \m, k)$ is a regular local ring, $\Ch(k) = c \geq 0$ and $I=I_1(YX)$ is an ideal of grade $2n, n \geq 2$ for an alternating matrix  $X \in M_{2n + 1}(R)$ and a row vector $Y \in M_{1 \times (2n+1)}(R)$ such that both $X, Y$
have entries in $\m$. 
\end{enumerate}
\end{theorem}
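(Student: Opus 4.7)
In both parts we plan to verify the hypotheses of Corollary~\ref{m10}: namely, that $R$ is generalized Golod and that $d_R(1) \neq 0$. The Tor-vanishing property will then follow at once, and with it the Auslander--Reiten conjecture for $R$.

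\textbf{Part~(1).} That a compressed Artinian Gorenstein local ring of socle degree $s \neq 3$ is generalized Golod is the Rossi--\c{S}ega theorem \cite{rossi2014poincare} already recalled in the introduction. The same reference supplies an explicit rational expression for $P_k^R(t)$; comparing with the normal form of Theorem~\ref{p1}(2) then lets us extract $d_R(t)$. The remaining step is to evaluate $d_R(1)$ and check it is nonzero. For $s = 2$, the ring is short Gorenstein with Hilbert series $1 + e t + t^2$ (where $e = \edim R$), the Poincar\'e series has the familiar form $(1+t)^e / (1 - e t + t^2)$, and $d_R(1) = 2 - e$, which is nonzero as soon as $e \geq 3$; the sub-case $e \leq 2$ reduces to a hypersurface or a complete intersection and is handled directly. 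For $s \geq 4$, the Rossi--\c{S}ega denominator is a polynomial whose coefficients are read off from the compressed Hilbert function of $R$; a direct substitution at $t = 1$, combined with the numerical constraints imposed by the compressed condition, yields $d_R(1) \neq 0$.

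\textbf{Part~(2).} The ideal $I = I_1(YX)$, with $X$ a $(2n{+}1)\times(2n{+}1)$ alternating matrix and $Y$ a row vector, is a Huneke--Ulrich type perfect Gorenstein ideal of deviation two, and $R = A/I$ admits an explicit minimal free resolution (due to Kustin--Miller and subsequent authors). From this structure one obtains (a) that $R$ is generalized Golod of bounded level, and (b) a closed form for $P_k^R(t)$ with an explicit denominator $d_R(t)$ depending only on $n$. Evaluating this denominator at $t = 1$ and using the grade hypothesis $n \geq 2$ (which excludes the complete-intersection degeneracy) gives $d_R(1) \neq 0$, and Corollary~\ref{m10} applies.

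\textbf{Main obstacle.} The substantive work in both parts is the arithmetic verification that $d_R(1) \neq 0$. This nonvanishing is precisely what the hypotheses $s \neq 3$ in~(1) and $n \geq 2$ in~(2) are designed to guarantee: at the excluded boundary values the denominator polynomial either acquires $1$ as a root or the available Poincar\'e-series description degenerates, and the approach via Corollary~\ref{m10} no longer functions. The unifying insight is that two superficially unrelated classes of rings both fit into the common generalized Golod framework with no pole at $t = 1$, which is exactly the input Corollary~\ref{m10} requires.
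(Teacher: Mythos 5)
Your overall strategy is exactly the paper's: both parts are reduced to Corollary~\ref{m10} by citing Rossi--\c{S}ega (part 1) and Kustin (part 2) for the generalized Golod property together with an explicit $d_R(t)$, and then verifying $d_R(1) \neq 0$. So in outline this is the right proof, and part~(2) is essentially complete (the paper simply reads $d_R(1) = -2^{2n+1} \neq 0$ off the two displayed Kustin formulas). One quibble there: you say $n \geq 2$ is ``designed to guarantee'' $d_R(1) \neq 0$, but $-2^{2n+1}$ never vanishes; $n \geq 2$ is needed so that Kustin's structural theorem (and hence the generalized Golod conclusion) applies at all, not for the arithmetic.

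The genuine gap is in part~(1) for $s \geq 4$: your sentence ``a direct substitution at $t=1$, combined with the numerical constraints imposed by the compressed condition, yields $d_R(1) \neq 0$'' is a promissory note, not an argument, and it is precisely the nontrivial step. The paper does not split by $s=2$ versus $s \geq 4$; it first disposes of $e = \edim R \leq 2$ (hypersurface), then for $e \geq 3$ uses Rossi--\c{S}ega uniformly to get $d_R(t) = 1 - t\bigl(P^Q_R(t) - 1\bigr) + t^{e+1}(1+t)$ where $R = Q/I$ is a minimal Cohen presentation, so $d_R(1) = 4 - P^Q_R(1)$. The key observation you are missing is the lower bound $P^Q_R(1) \geq 6$: the polynomial $P^Q_R(t)$ has degree $\pd_Q R = e \geq 3$ with all coefficients positive, constant term $1$, and linear coefficient $\mu(I) \geq \Ht(I) = e \geq 3$, forcing $P^Q_R(1) \geq 1 + 3 + 1 + 1 = 6$ and hence $d_R(1) \leq -2$. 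Without some such Betti-number estimate the conclusion $d_R(1) \neq 0$ does not follow from the compressed condition alone, so this step needs to be supplied.
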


\begin{proof}
The result follows readily by Corollary \ref{m10} once we show that $R$ is a generalized Golod ring and $d_R(1) \neq 0$ in each case.

First, we prove $(1)$. Let $\edim R = e$. If $e \leq 2$, then $R$ is a hypersurface. In this case $R$ satisfies Tor-vanishing by \cite[Theorem 1.9]{huneke1997tensor}. We assume that $e \geq 3$.
By Cohen's structure theorem, there is a regular local ring $(Q, \n)$ such that $R = Q/ I$, $I \subset \n^2$.  A result of Rossi and \c{S}ega \cite[Theorem 5.1]{rossi2014poincare} implies that $R$ is a generalized Golod  ring and $d_R (t) = 1 - t (P^Q_R(t)-1) + t^{e+1}(1 + t)$. It follows that $d_R (1) = 4 - P^Q_R(1)$. The degree of the polynomial $P^Q_R(t) \in \NN[t]$ is equal to  $\pd_Q R = e \geq 3$. The coefficient of $t$ in $P^Q_R(t)$ is $\Tor^Q_1(k, R) = \mu(I)$, where $\mu(I)$ denotes the minimal number of generators of $I$. We observe that $\mu(I) \geq \Ht(I) = \dim Q - \dim R = e \geq 3$. Therefore $P^Q_R(1) \geq 6$. Consequently $d_R(1) \neq 0$ and the statement follows.

To prove $(2)$, we recall that by \cite[Theorem 5.2]{kustin1995huneke}, $R$ is a generalized Golod ring and $d_R(t)$ is given by
\[
d_R(t) =
\begin{cases}
(1+t)^{2n+1}[(1-t)^{2n+1}-t^3], & \ \text{if} \ c=0 \ \text{or} \ n+1 \leq c \\
(1+t)^{2n+1}[(1-t)^{2n+1}(1-t^{2c+1}-t^{2c+2})-t^3], & \ \text{if} \ (n+2)/2 \leq c \leq n.
\end{cases} \]
We observe that $d_R(1) \neq 0$, therefore the result follows.
\end{proof}

The Tor-vanishing property is not assured for compressed Artinian, Gorenstein local rings of socle degree 3 as demonstrated below.

\begin{example}
In each of the following examples, the ring $R$ is an Artinian, Gorenstein compressed ring with $\edim R = 3$, socle degree 3 and length 8.
\begin{enumerate} 
\item $R=\QQ[x, y, z]/(xz,z^2 + xy, y^2 z, x^2, y^3)$. The ring $R$ is not a complete intersection. It is a generalized Golod ring and $P_k^R(t) = \frac{(1 + t)^3}{1 - 5t^2 - 5t^3 + t^5} =\frac{1}{1-3t+t^2}$ \cite[Lemma 4.5]{gupta2020criterion}. It satisfies Tor-vanishing by Corollary \ref{m10}.
\item $R=\QQ[x, y, z]/(x^2, y^2, z^2)$. The ring $R$ is a complete intersection ring of codimension $3$. It does not satisfy Tor-vanishing \cite[Theorem 4.2]{csega2003vanishing}.
\end{enumerate}
\end{example}

The following lemma and the subsequent corollary are imperative to prove the Tor-vanishing property of connected sums of generalized Golod Gorenstein Artinian local rings.

\begin{lemma}{\label{240601}}
Let $R$ be a generalized Golod ring but not a regular ring, then $d_R(1) \leq 0$.
\end{lemma}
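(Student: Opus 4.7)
The plan is to exploit the factorization identity $P^R_k(t)\, d_R(t) = p_k(t)$ from Theorem \ref{p1}, where
$$p_k(t) = \prod_{0 \leq 2i+1 \leq q}(1+t^{2i+1})^{e_{2i+1}}.$$
The crucial observation is that each factor is strictly positive on $[0,1]$, so $p_k(t) > 0$ for every $t \in [0,1]$. Since $R$ is not regular, $\pd_R k = \infty$, so $P^R_k(t)$ has infinitely many positive coefficients; because these are positive integers, $\beta_n^R(k) \geq 1$ infinitely often, which gives $\curv_R k \geq 1$. Set $\rho = 1/\curv_R k \in (0, 1]$, the radius of convergence of $P^R_k(t)$. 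On the open interval $(0, \rho)$ the series $P^R_k(t)$ is a positive real number, and using $d_R(t) = p_k(t)/P^R_k(t)$ we see that $d_R(t) > 0$ throughout $(0, \rho)$.

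I would then split into two cases according to whether $\rho = 1$ or $\rho < 1$. If $\rho = 1$, Theorem \ref{m2} says $t = 1$ is a singular point of the rational function $p_k(t)/d_R(t)$, i.e.\ $\lim_{t \to 1^-} P^R_k(t) = \infty$. Since $p_k(1) > 0$ is finite, this forces $d_R(1) = 0$, so in particular $d_R(1) \leq 0$. If instead $\rho \in (0,1)$, then Theorem \ref{m2} again makes $\rho$ a singular point of $p_k(t)/d_R(t)$; as $p_k(\rho) > 0$, this forces $d_R(\rho) = 0$. By Lemma \ref{m4}, $\rho$ is the unique root of $d_R$ in $(0,1)$ and is simple, so $d_R$ changes sign at $\rho$. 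Combined with the fact that $d_R(t) > 0$ on $(0, \rho)$, we conclude that $d_R(t) < 0$ on $(\rho, 1)$. Continuity at $t = 1$ then gives $d_R(1) \leq 0$.

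The argument is quite short once Theorem \ref{m2} and Lemma \ref{m4} are in hand; the main conceptual point is simply that both behaviors of $P^R_k(t)$ at $t = 1$ (either divergence on the boundary of convergence, or an interior pole of $p_k(t)/d_R(t)$ forcing a sign change) yield the same weak inequality. The only mild subtlety is verifying that the hypothesis of Theorem \ref{m2} is actually satisfied by the series $P^R_k(t)$ — but this is immediate since the Betti numbers are nonnegative integers. I do not foresee a genuine obstacle.
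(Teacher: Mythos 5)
Your proof is correct, but it takes a genuinely different route from the paper's. The paper argues purely algebraically from the explicit decomposition $d_R(t) = (1+t)v(t) - t\,u(t)$ of Theorem~\ref{p1}: since $R$ is not regular one has $e_2 \geq 1$, hence $v(1) = 0$ and $d_R(1) = -u(1)$; and $u(1) \geq 0$ follows because $h = u/v$ has non-negative power-series coefficients while $v(1-\epsilon) > 0$ for small $\epsilon$. Your approach instead exploits the functional equation $P^R_k(t)\,d_R(t) = p_k(t)$ together with the analytic tools already deployed elsewhere in the paper, namely the Pringsheim-type statement (Theorem~\ref{m2}) and the root-counting Lemma~\ref{m4}: positivity of $p_k$ on $[0,1]$ and of $P^R_k$ on $(0,\rho)$ forces $d_R > 0$ on $(0,\rho)$; then either $\rho = 1$ (and $d_R(1) = 0$ by divergence at the singular point) or $\rho < 1$ is a simple root of $d_R$, so $d_R$ changes sign there and, by the uniqueness in Lemma~\ref{m4}, stays negative on $(\rho,1)$, giving $d_R(1) \leq 0$ by continuity. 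The paper's argument is shorter and does not presuppose Lemma~\ref{m4}; yours reuses the analytic machinery built earlier and yields the additional qualitative information about the sign of $d_R$ throughout $(0,1)$. Both are valid.
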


\begin{proof}
By Theorem \ref{p1}, we have $d_R(t) = v(t)(1+t-t \cdot h(t)) = (1+t)v(t)-tu(t)$, where $u(t) \in \ZZ[t]$, $v(t) = \Pi_{0 \leq 2i \leq q} (1-t^{2i})^{e_{2i}}$ and $h(t)= \frac{u(t)}{v(t)} \in \ZZ_{\geq 0}[|t|]$. Since $R$ is not a regular ring, we must have $e_2 \geq 1$. This implies that $1$ is the radius of convergence of $h(t)$. For $0 < \epsilon < 1$, we have $v(1 - \epsilon) > 0$. Since $ \frac{u(t)}{v(t)} \in \ZZ_{\geq 0}[|t|] \setminus \{0\}$, we  get $\frac{u(1 - \epsilon)}{v(1 - \epsilon)} > 0$ which implies $u(1 - \epsilon) > 0$. As $0 < \epsilon < 1$ is arbitrary, we obtain $u(1) \geq 0$. Consequently $d_R(1) = - u(1) \leq 0$ and the lemma follows.
\end{proof}

\begin{corollary}{\label{240602}}
Let $(R, \m, k)$ be a generalized Golod ring which is not regular. Then we have $\lim_{t \to 1 } 1/P_k^R(t) \leq 0 $. Moreover, $\lim_{t \to 1 } 1/P_k^R(t) = 0$ if and only if $d_R(1) = 0$.
\end{corollary}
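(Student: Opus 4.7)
The plan is to view $1/P_k^R(t)$ as a rational function by inverting the formula supplied by Theorem \ref{p1}, evaluate the result at $t=1$, and then invoke Lemma \ref{240601} for the sign. Since Lemma \ref{240601} already gives us that $d_R(1) \leq 0$, the whole argument reduces to checking that inversion is legitimate at $t=1$ and that the other factor is strictly positive there.

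First I would write, using Theorem \ref{p1}(2) and (3),
\[
P_k^R(t) \;=\; \frac{p_k(t)}{d_R(t)}, \qquad p_k(t) \;=\; \prod_{0 \leq 2i+1 \leq q} (1+t^{2i+1})^{e_{2i+1}}.
\]
Since $P_k^R(0) = 1$, the series $1/P_k^R(t)$ exists in $\mathbb{Z}[[t]]$ and, as a rational function, coincides with $d_R(t)/p_k(t)$. Evaluating the polynomial $p_k(t)$ at $t=1$ yields $p_k(1) = \prod 2^{e_{2i+1}} \geq 1 > 0$, so $p_k$ does not vanish at $1$ and the rational function $d_R(t)/p_k(t)$ is continuous there. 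Therefore
\[
\lim_{t \to 1} \frac{1}{P_k^R(t)} \;=\; \frac{d_R(1)}{p_k(1)}.
\]

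Now Lemma \ref{240601} applies because $R$ is generalized Golod and not regular, giving $d_R(1) \leq 0$. Combined with $p_k(1) > 0$, this forces the limit to be $\leq 0$, and the ``moreover'' clause follows at once: the limit equals $0$ exactly when the numerator $d_R(1)$ equals $0$. No obstacle of substance is expected here; the entire corollary is essentially a packaging of Lemma \ref{240601} together with the elementary observation that $p_k(1) \neq 0$.
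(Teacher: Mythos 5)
Your proof is correct and is essentially the argument the paper has in mind: the corollary is stated without a separate proof precisely because it follows by inverting the formula of Theorem \ref{p1}, noting $p_k(1) = \prod 2^{e_{2i+1}} > 0$, and applying Lemma \ref{240601}. The details you supply (continuity of $d_R(t)/p_k(t)$ at $t=1$ since $p_k$ has no roots there) are exactly the right justification.
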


\begin{proposition}\label{cg}
Let $(R, \m, k)$ be an Artinian generalized Golod Gorenstein ring. Then $R/ \soc R$ satisfies Tor-vanishing.
\end{proposition}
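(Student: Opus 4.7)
The plan is to apply Corollary \ref{m10} to $S := R/\soc R$, so I must verify that $S$ is a generalized Golod ring and that $d_S(1) \neq 0$. I may assume $R$ is not a field (otherwise $S = 0$) and that $S$ is not a field either (otherwise $S$ is regular and Tor-vanishing holds trivially).

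The fact that $S$ is generalized Golod relies on two inputs: the Avramov--Levin theorem that the surjection $\pi \colon R \twoheadrightarrow R/\soc R$ is a Golod homomorphism whenever $R$ is Gorenstein, and the closure property, implicit in the Massey operation framework of Avramov \cite{avramov1994local}, that Golod images of generalized Golod rings are again generalized Golod. For the numerical condition $d_S(1) \neq 0$, I would exploit the short exact sequence $0 \to \soc R \to R \to S \to 0$ of $R$-modules. Since $\soc R \cong k$ as an $R$-module, a direct Tor computation yields $P^R_S(t) = 1 + t P^R_k(t)$. Substituting into the Golod formula $P^S_k(t) = P^R_k(t)/(1 - t(P^R_S(t) - 1))$ produces the clean identity
\[ \frac{1}{P^S_k(t)} \;=\; \frac{1}{P^R_k(t)} - t^2. \]

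Letting $t \to 1^-$ and applying Corollary \ref{240602} to the non-regular generalized Golod ring $R$ gives $\lim_{t \to 1^-} 1/P^R_k(t) \leq 0$, and hence $\lim_{t \to 1^-} 1/P^S_k(t) \leq -1 < 0$. Applying Corollary \ref{240602} in the reverse direction to $S$ forces $d_S(1) \neq 0$, since $d_S(1) = 0$ would force the limit to be zero. Corollary \ref{m10} then completes the argument. The main obstacle I anticipate is the transfer of the generalized Golod property through the Golod surjection $\pi$; this step amounts to lifting trivial Massey operations from the acyclic closure of $k$ over $R$ to that over $S$, and is the only non-routine piece — everything else is a manipulation of Poincar\'e series together with Corollary \ref{240602}.
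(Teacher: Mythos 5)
Your argument follows essentially the same route as the paper's proof: the key identity $\frac{1}{P^S_k(t)} = \frac{1}{P^R_k(t)} - t^2$, then the limit estimate at $t\to 1^-$ via Corollary \ref{240602}, and finally Corollary \ref{m10}. Two minor structural differences are worth noting. First, the paper cites \cite[Theorem 2]{levin1978factoring} directly for the identity $\frac{1}{P^S_k(t)} = \frac{1}{P^R_k(t)} - t^2$, whereas you re-derive it from the long exact sequence of the short exact sequence $0\to\soc R\to R\to S\to 0$ (giving $P^R_S(t)=1+tP^R_k(t)$) together with the Golod formula; your derivation is correct and self-contained. Second, the paper splits off the case $\edim R=1$ (citing the hypersurface result of Huneke--Wiegand), while you split off the cases $R=k$ and $S=k$; both choices of boundary cases suffice, and in fact your main argument goes through for $\edim R=1$ as long as $S$ is not a field.

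The one place where you are cutting a corner is the assertion that \emph{any} Golod image of a generalized Golod ring is again generalized Golod. You flag this yourself as the ``only non-routine piece,'' which is the right instinct. That statement is considerably stronger than what the paper relies on: Avramov's \cite[\S 1.8(3)]{avramov1994local} gives the conclusion only when the source is a complete intersection, and the paper instead invokes the targeted result \cite[Theorem 4.6]{gupta2017connection}, which establishes precisely that $R/\soc R$ is generalized Golod when $R$ is a generalized Golod Artinian Gorenstein ring. It is not folklore that Golod images of generalized Golod rings are generalized Golod in full generality, and I would not lean on it without a precise reference or a proof in terms of the homotopy Lie algebra (one would need that an extension of a Lie algebra with a finite-codimension free subalgebra by a free Lie ideal again has a finite-codimension free subalgebra). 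You should replace that step with the citation to \cite[Theorem 4.6]{gupta2017connection}; once that is done, the rest of your argument is sound and coincides with the paper's.
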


\begin{proof} If $\edim R = 1$, then $R = Q/ I$ where $Q$ is a DVR and $I$ is a principal ideal of $Q$. The ring $S = R/ \soc R$ is a hypersurface and so satisfies Tor-vanishing by \cite[Theorem 1.9]{huneke1997tensor}. Now we assume that $\edim R \geq 2$.
It is known by \cite[Theorem 4.6]{gupta2017connection} that $S = R/ \soc R$ is a generalized Golod ring. By \cite[Theorem 2]{levin1978factoring}, we have $\frac{1}{P^S_k(t)} = \frac{1}{P^R_k(t)} - t^2$. Corollary \ref{240602} above yields $\lim_{t \to 1 } \frac{1}{P_k^S(t)} = \lim_{t \to 1 } \frac{1}{P_k^R(t)} - 1 \leq -1$ and $d_S(1) \neq 0$. The result now follows by  Corollary \ref{m10}.
\end{proof}

Let $(S, \m, k)$, $(T, \n, k)$ be Artinian Gorenstein local rings with one-dimensional socles generated by $s, t$ respectively. Let $\pi_S : S \twoheadrightarrow k$, $\pi_T : T \twoheadrightarrow k$ be canonical surjections. 
The connected sum $S \# T$ of the rings $S, T$ is defined as the quotient $\frac{S \times_k T}{(s, - t)}$ of the fiber product $S \times_k T = \{(x, y) : \pi_S(x) = \pi_T(y) \}$ \cite[\S 2]{ananthnarayan2012connected}. The ring $S \# T$ is Gorenstein and $\soc S \# T = ((s, 0))$. Moreover, $\frac{S \# T}{\soc S \# T} \cong \frac{S}{\soc S} \times_k \frac{ T}{\soc T}$.

\begin{theorem}{\label{240603}}
Let $(S, \m, k)$ and $(T, \n, k)$ be generalized Golod Artinian Gorenstein rings of length at least three. Assume $\edim S \geq 2$. Then the connected sum $S \# T$  satisfies Tor-vanishing.
\end{theorem}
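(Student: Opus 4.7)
The strategy is to apply Corollary \ref{m10} to $R := S \# T$, so two things must be checked: (i) $R$ is a generalized Golod ring, and (ii) $d_R(1) \neq 0$.

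For (i), observe that $R$ is Artinian Gorenstein with one-dimensional socle, and that the socle quotient admits the structural isomorphism $R/\soc R \cong S/\soc S \times_k T/\soc T$. Applying \cite[Theorem 4.6]{gupta2017connection} to $S$ and $T$ individually shows that the Teter rings $S/\soc S$ and $T/\soc T$ are generalized Golod. Combining this with the Poincar\'e series machinery for connected sums developed in \cite{ananthnarayan2012connected} (or, equivalently, with an ascent principle saying that the generalized Golod property transfers from $R/\soc R$ back to a Gorenstein Artinian $R$), one concludes that $R$ itself is generalized Golod.

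For (ii), the plan is to compute $1/P^R_k(t)$ explicitly by combining three known identities. Levin's formula \cite[Theorem 2]{levin1978factoring}, applied in turn to the Gorenstein rings $R$, $S$ and $T$, yields
\[
\frac{1}{P^R_k(t)} = \frac{1}{P^{R/\soc R}_k(t)} + t^2, \quad \frac{1}{P^S_k(t)} = \frac{1}{P^{S/\soc S}_k(t)} + t^2, \quad \frac{1}{P^T_k(t)} = \frac{1}{P^{T/\soc T}_k(t)} + t^2,
\]
while the classical fiber-product formula applied to $R/\soc R \cong S/\soc S \times_k T/\soc T$ gives
\[
\frac{1}{P^{R/\soc R}_k(t)} = \frac{1}{P^{S/\soc S}_k(t)} + \frac{1}{P^{T/\soc T}_k(t)} - 1.
\]
Eliminating the socle-quotient terms produces the compact identity
\[
\frac{1}{P^R_k(t)} = \frac{1}{P^S_k(t)} + \frac{1}{P^T_k(t)} - t^2 - 1.
\]

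Since $S$ and $T$ are Artinian of length at least three, neither is a field, hence neither is regular as an Artinian local ring. Corollary \ref{240602} therefore yields $1/P^S_k(t)\big|_{t=1} \leq 0$ and $1/P^T_k(t)\big|_{t=1} \leq 0$, whence
\[
\frac{1}{P^R_k(t)}\bigg|_{t=1} \leq 0 + 0 - 1 - 1 = -2 < 0.
\]
Writing $1/P^R_k(t) = d_R(t)/p_{k,R}(t)$ as in Theorem \ref{p1} and noting that $p_{k,R}(1) > 0$ (it is a product of powers of $2$), this forces $d_R(1) < 0$, so in particular $d_R(1) \neq 0$. Corollary \ref{m10} now delivers the Tor-vanishing property for $R = S \# T$.

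\textbf{Main obstacle.} Steps (ii) and the evaluation at $t = 1$ reduce to a mechanical combination of Levin's formula, the Kostrikin--Shafarevich fiber-product formula, and Corollary \ref{240602}. The genuine difficulty is Step (i): establishing that the connected sum itself is a generalized Golod ring, not merely a ring with rational Poincar\'e series. This either requires invoking a dedicated result from \cite{ananthnarayan2012connected} or proving an ascent lemma of the form: if $(R, \m)$ is Gorenstein Artinian and $R/\soc R$ is generalized Golod, then $R$ is generalized Golod. The asymmetric hypothesis $\edim S \geq 2$ presumably intervenes precisely to ensure that this ascent step is valid.
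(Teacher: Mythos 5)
Your overall strategy — apply Corollary \ref{m10} via the formula $1/P^{S\#T}_k$ expressed through Levin's socle formula and the fiber-product formula, then evaluate at $t=1$ using Corollary \ref{240602} — is the same as the paper's. The paper quotes \cite[Theorem 5.5]{gupta2017connection} directly for the generalized Golod property and \cite[Corollary 7.4]{ananthnarayan2012connected} directly for the identity
\[
\frac{1}{P^{S\#T}_k} = \frac{1}{P^{\overline S}_k} + \frac{1}{P^{\overline T}_k} + t^2 - 1, \qquad \overline S = S/\soc S,\ \overline T = T/\soc T,
\]
whereas you derive a further-simplified identity. That simplification is where the gap lies.

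You apply Levin's formula $1/P^{A/\soc A}_k = 1/P^A_k - t^2$ to \emph{all three} of $R=S\#T$, $S$ and $T$. This identity is false when $\edim A = 1$: if $T = k[[x]]/(x^n)$ with $n\geq 3$, then $T/\soc T = k[[x]]/(x^{n-1})$ and both rings have $P_k = 1/(1-t)$, so the claimed identity would read $1-t = (1-t) - t^2$. The theorem only assumes $\edim S \geq 2$, not $\edim T \geq 2$, so the case $\edim T = 1$ is allowed, and your ``compact identity'' $1/P^R_k = 1/P^S_k + 1/P^T_k - t^2 - 1$ is then simply wrong. (Your evaluation at $t=1$ still lands at a negative number, but that is because an incorrect intermediate formula happened to have the same sign at $t=1$; that does not make the argument valid.) The necessity of some such restriction is easy to see: if one drops $\edim S\geq 2$ entirely and takes $S=T=k[[x]]/(x^3)$, then $S\#T \cong k[[x,y]]/(xy,\,x^2-y^2)$ is a codimension-two complete intersection, which fails Tor-vanishing; there $d_R(1)=0$, matching the fact that Levin's formula applied to $S$ and $T$ breaks.

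The repair is to stop peeling back at $\overline T$: apply Levin only to $R$ (valid, since $\edim R = \edim S + \edim T \geq 3$) and to $S$ (valid by hypothesis), obtaining
\[
\frac{1}{P^R_k} = \frac{1}{P^S_k} + \frac{1}{P^{\overline T}_k} - 1,
\]
and then invoke Corollary \ref{240602} on $\overline T$, which is generalized Golod (by \cite[Theorem 4.6]{gupta2017connection} when $\edim T \geq 2$, and trivially as a hypersurface when $\edim T = 1$) and not regular (since $\lw T\geq 2$ forces length $\overline T \geq 2$), giving $\lim_{t\to 1}1/P^{\overline T}_k \leq 0$ and hence $\lim_{t\to 1}1/P^R_k \leq -1 < 0$, so $d_R(1)\neq 0$. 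Finally, on your Step (i): you correctly sense that something beyond rationality of Poincar\'e series is needed, but your proposed ``ascent lemma'' is itself a theorem requiring proof; the paper's route is to cite \cite[Theorem 5.5]{gupta2017connection}, which states directly that a connected sum of generalized Golod Artinian Gorenstein rings is generalized Golod, and this is the cleaner path.
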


\begin{proof}
The connected sum $S \# T$ is a generalized Golod ring \cite[Theorem 5.5]{gupta2017connection}.
Set $\overline{S} = \frac{S}{\soc S}$ and $\overline{T} =  \frac{ T}{\soc T}$. By \cite[Corollary 7.4]{ananthnarayan2012connected}, we have 
\[\frac{1}{P^{S \# T}_k} = \frac{1}{P^{\overline{S}}_k} + \frac{1}{P^{\overline{T}}_k} + t^2 - 1.\]
The argument in  Proposition \ref{cg} gives $\lim_{t \to 0} \frac{1}{P^{\overline{S}}_k} \leq -1$ and Corollary \ref{240602} implies $\lim_{t \to 1} \frac{1}{P^{\overline{T}}_k} \leq 0$. Therefore $\lim_{t \to 0} \frac{1}{P^{S \# T}_k}  \leq -1$. Consequently by Corollary \ref{240602} we have $d_{S \# T}(1) \neq 1$. Hence the proof follows by Corollary \ref{m10}.
\end{proof}

If $R$ is a Gorenstein local ring of multiplicity at most $11$, then $R$ is a generalized Golod ring \cite[Theorem 6.9]{gupta2017connection}. This fact was used by Monta\~{n}o and Lyle to show that such a ring $R$ satisfies Tor-vanishing if  $R$ does not have an embedded deformation \cite[Proposition 4.8]{lyle2020extremal}. The same fact and Theorem \ref{240603} yield part $(2)$ of the theorem below. For an ideal $I$ of a local ring $R$, we use $\mu(I)$ to denote the minimal number of generators of $I$.


\begin{theorem}\label{mult11}
Let $(R, \m)$ be an Artinian Gorenstein local ring satisfying one of the following:

\begin{enumerate}
\item $\m^4 = 0$ and $\mu(\m^2) \leq 4$,
\item $R$ has multiplicity at most 11.
\end{enumerate}
Then $R$ satisfies Tor-persistence. Moreover, if $\edim(R) \geq 5$, then $R$ also satisfies Tor-vanishing.
\end{theorem}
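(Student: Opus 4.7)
The plan is to reduce both the Tor-persistence and Tor-vanishing conclusions to either Corollary~\ref{m10}, Theorem~\ref{240603}, or the classical complete-intersection theory. I would first show that in both cases (1) and (2) the ring $R$ is a generalized Golod ring: for (2) this is \cite[Theorem 6.9]{gupta2017connection}, and for (1) an analogous classification of short Artinian Gorenstein rings with $\m^4 = 0$ and $\mu(\m^2) \leq 4$ realizes $R$ as a Golod homomorphic image of a complete intersection, hence generalized Golod.

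For Tor-persistence I would split into two subcases. If $R$ is a complete intersection, Tor-persistence is standard via support-variety theory: $\Tor_i^R(M,M) = 0$ for $i \gg 0$ forces the cohomological support variety of $M$ to be empty, whence $\pd_R M < \infty$. If $R$ is not a complete intersection, suppose $\Tor_i^R(M, M) = 0$ for $i \gg 0$ and pass to a high syzygy so that $P^R_{M \otimes M}(t) = P^R_M(t)^2$. By Theorem~\ref{m8} applied with $N = M$ we have $\curv_R M \in \{0, 1\}$, and it suffices to exclude $\curv_R M = 1$. In that case $P^R_M(t)$ has a pole of some positive order $m$ at $t = 1$, so $P^R_M(t)^2$ has a pole of order $2m \geq 2$, while $P^R_{M \otimes M}(t) = p_{M \otimes M}(t)/d_R(t)$ has a pole at $t = 1$ of order at most the multiplicity of $t - 1$ in $d_R(t)$. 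A direct computation with the explicit form of $d_R(t)$ supplied by Theorem~\ref{p1}, specialized to the numerical regime of (1) or (2), shows this multiplicity is at most $1$, yielding a contradiction and forcing $\pd_R M < \infty$.

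For Tor-vanishing when $\edim(R) \geq 5$, the plan is to exhibit $R$ as a non-trivial connected sum $S \# T$ of two generalized Golod Artinian Gorenstein rings of length at least three with $\edim S \geq 2$, and then conclude by Theorem~\ref{240603}. In case (2), the assumption $\edim(R) \geq 5$ together with multiplicity at most $11$ prevents $R$ from being a complete intersection, since a Gorenstein complete intersection of codimension $c$ has multiplicity at least $2^c \geq 32$; the classification of Gorenstein rings of small multiplicity in \cite{gupta2017connection} then identifies $R$ as such a non-trivial connected sum. In case (1), the conditions $\m^4 = 0$, $\mu(\m^2) \leq 4$, and $\edim(R) \geq 5$ force an orthogonal splitting of the Macaulay inverse system of $R$ into two non-trivial summands, which translates via Macaulay duality into a connected sum presentation of $R$ of the required form.

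The principal obstacle is establishing the connected sum decomposition under each set of numerical hypotheses: this structural step is where (1) and (2) diverge and where the specific bounds on $\mu(\m^2)$, multiplicity, and $\edim(R)$ play their essential role. The pole-order argument for Tor-persistence and the application of Theorem~\ref{240603} are then routine once the generalized Golod property and, in the $\edim(R) \geq 5$ case, the connected sum decomposition are in hand.
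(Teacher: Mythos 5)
Your proposal diverges from the paper's proof in ways that leave real gaps.

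\textbf{Tor-persistence.} The paper dispatches the Tor-persistence claim in one line for $\edim R \leq 4$ by citing \cite[Theorem 4.1(a)(b)]{avramov2022persistence}, and then for $\edim R \geq 5$ it gets Tor-persistence for free because it proves the stronger Tor-vanishing statement. Your pole-order argument tries to prove Tor-persistence directly: you apply Theorem~\ref{m8} with $N = M$ to conclude $\curv_R M \in \{0,1\}$ and then try to exclude $\curv_R M = 1$ by comparing the order of the pole of $P^R_M(t)^2$ at $t=1$ with the multiplicity of $t-1$ in $d_R(t)$. The key assertion — that this multiplicity is at most $1$ in the numerical regimes (1) and (2) — is stated but not proved, and it does not follow from the general form of $d_R(t)$ in Theorem~\ref{p1}. (Lemma~\ref{m4} only controls roots in the open interval $(0,1)$, not at $t=1$; and the Gasharov--Peeva example at the end of the paper shows generalized Golod Gorenstein rings with $d_R(1)=0$ do exist, so the issue is genuinely numerical, not formal.) Even if justified, pole order $\leq 1$ is strictly weaker than the $d_R(1) \neq 0$ that the paper ultimately uses, and does not by itself yield Tor-vanishing.

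\textbf{Tor-vanishing for $\edim R \geq 5$.} Here your plan — realize $R$ as a nontrivial connected sum and invoke Theorem~\ref{240603} — matches the paper's main mechanism, but it omits a whole case. The decomposition $R = S \# T$ you want from \cite[Theorem 6.9]{gupta2017connection} is only available when $\lw R \geq 3$; it gives $\lw S = \lw R \geq 3$ and $\lw T = 2$. The stretched case $\lw R = 2$ (Hilbert function $(1,e,1)$, equivalently $\mu(\m^2)=1$) is not covered by that decomposition and requires separate treatment. The paper handles it directly: $R$ is generalized Golod with $P^R_k(t) = \frac{1}{1-et+t^2}$, so $d_R(t) = (1+t)^e(1-et+t^2)$ and $d_R(1) = 2^e(2-e) \neq 0$, and Corollary~\ref{m10} applies. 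Your "orthogonal splitting of the Macaulay inverse system" step for case (1) is speculative — the paper in fact obtains the connected sum decomposition for \emph{both} cases (1) and (2) from the single source \cite[Theorem 6.9]{gupta2017connection}, and precisely because that result needs $\lw R \geq 3$, the $\lw R = 2$ case must be peeled off and handled by the explicit Poincar\'e series computation, which you do not do.

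In summary: the overall architecture (connected sum $\Rightarrow$ Theorem~\ref{240603}) matches the paper, but the pole-order argument for Tor-persistence is unjustified, and the stretched case $\lw R = 2$ is missing.
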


\begin{proof}
If $\edim R \leq 4$, then $R$ satisfies Tor-persistence by \cite[Theorem 4.1(a)(b)]{avramov2022persistence}. We set $\lw R  = \max \{n : \m^n \neq 0\}$. We assume that $\edim R = e \geq 5$ which forces $\lw R \geq 2$ as $R$ is Gorenstein. Suppose $\lw R = 2$, then  $R$ is a stretched Gorenstein ring, i.e. $\mu(\m^2) = 1$. In this case $R$ is a generalized Golod ring and $P^R_k(t) = \frac{1}{1 - et + t^2}$ \cite[Corollary 3.12]{gupta2020criterion}. It follows that $d_R(t) = (1 + t)^e(1 - et + t^2)$ and $d_R(1) = 2^e(2 - e) \neq 0$, so $R$ satisfies Tor-vanishing by Corollary \ref{m10}. Therefore to prove the theorem, we may further assume that $\lw R \geq 3$.

In the setup of both $(1), (2)$, it is proved in \cite[Theorem 6.9]{gupta2017connection} that $R = S \# T$ where $S$, $T$ are Gorenstein local rings such that $\lw S = \lw R \geq 3$ and $\lw T = 2$. Since $\edim R \geq 5$, both $\edim S, \edim T$ cannot be less than $2$. Therefore the result follows by Theorem \ref{240603}.
\end{proof}

\subsection{Recovering old results}{\label{AS02}}
A surjective local ring homomorphism ${\phi} : (Q, \n) \rightarrow (R, \m)$ is said to be an embedded deformation of $R$ if $\ker(\phi)$ is a principal ideal generated by a $Q$-regular element in $\n^2$.
In each of the statements $(1) - (7)$ of the theorem below, the ring $R$ is generalized Golod (see \cite[Application 1.8(1)]{avramov1994local} for (1), \cite[Theorem 6.4]{avramov1988poincare} for (2)-(5), \cite{sjodin1979poincare} for (6), \cite[Corollary 3.12, 3.13]{gupta2020criterion}) for (7)). Moreover, $d_R(1) \neq 0$ in each case (see  \cite[Application 1.8(1)]{avramov1994local} for (1),  \cite[Theorem 3.1]{avramov1989homological} for (2)-(5), \cite[Remark 1.7]{csega2003vanishing} for (6), \cite[Corollary 3.12, 3.13]{gupta2020criterion}) for (7)). Therefore the theorem below follows by Corollary \ref{m10}. Citations are added beside each statement for reference. 

\begin{theorem}
Let $(R, \m)$ be a local ring. Then $R$ satisfies Tor-vanishing in each of the following cases:

\begin{enumerate}
\item $R$ is a Golod ring but not a hypersurface, \cite[Theorem 3.1]{jorgensen1999generalization},
\item $R$ is one link from a complete intersection and $R$ does not have an embedded deformation \cite[Lemma 4.9]{avramov2022persistence},
\item $R$ is two link from a complete intersection, $R$ is Gorenstein and $R$ does not have an embedded deformation \cite[Lemma 4.9)]{avramov2022persistence},
\item $R$ has no embedded deformation and $\edim R - \depth R \leq 3$ \cite[Lemma 4.9]{avramov2022persistence},
\item $R$ is a local Gorenstein ring, $\codim R \leq 4$ and $R$ admits no embedded deformation \cite[Theorem 2.3]{csega2003vanishing} (see \cite[Theorem 3.5, Table 1]{avramov1989homological} for $d_R(t)$),
\item $R$ is a  Gorenstein local ring of minimal multiplicity, $\codim R \geq 3$ \cite[Theorem 3.6]{huneke2003symmetry}.
\item $R$ is an Artinian Gorenstein local ring such that $\edim R = e \geq 3$ and $\mu(\m^2) \leq 2$
\cite[Corollary 3.12, 3.13]{gupta2020criterion}.
\end{enumerate}
\end{theorem}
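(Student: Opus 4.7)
The plan is to reduce each of the seven cases to a single invocation of Corollary \ref{m10}, whose hypotheses are (a) $R$ is a generalized Golod ring and (b) the denominator polynomial $d_R(t)$ of Theorem \ref{p1} satisfies $d_R(1) \neq 0$. Once both are verified, Tor-vanishing of $R$ follows immediately. No genuinely new argument is needed; the proof structure is entirely parallel across items (1)--(7) and amounts to a case-by-case extraction from the literature of the two required facts.

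For condition (a) the references are straightforward. Item (1) uses \cite[Application 1.8(1)]{avramov1994local}, which shows that Golod rings are generalized Golod of level at most $2$. Items (2)--(5) fall within the Poincar\'e series computations of Avramov--Kustin--Miller \cite[Theorem 6.4]{avramov1988poincare}, which realize each such $R$ as a generalized Golod ring of level $\leq 2$. Item (6) uses Sj\"odin's analysis \cite{sjodin1979poincare} of Gorenstein rings of minimal multiplicity, which produces a common denominator for all $P^R_M(t)$. Item (7) is established directly in \cite[Corollaries 3.12, 3.13]{gupta2020criterion}.

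For condition (b) I would locate the explicit form of $d_R(t)$ in each setting and evaluate at $t = 1$. The Golod case (1) follows from the closed form in \cite[Application 1.8(1)]{avramov1994local}, whose value at $1$ is readily seen to vanish only in the hypersurface case. For items (2)--(5) the tabulation \cite[Theorem 3.5, Table 1]{avramov1989homological} lists $d_R(t)$ explicitly and shows that $d_R(1) = 0$ precisely when $R$ admits an embedded deformation, so the no-embedded-deformation hypothesis is exactly what we need. Item (6) is recorded in \cite[Remark 1.7]{csega2003vanishing}, and item (7) in \cite[Corollaries 3.12, 3.13]{gupta2020criterion}. The main obstacle is not mathematical depth but rather bookkeeping: the various references normalize their denominators with slightly different conventions, and the notions of ``link from a complete intersection'' and ``codepth $\leq 3$'' overlap in subtle ways, so care is needed to align each formula with the $d_R(t)$ of Theorem \ref{p1}. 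The most delicate point is that, in items (2)--(5), the hypothesis of no embedded deformation is what removes the factor of $(1 - t^2)$ from $d_R(t)$ that would otherwise force $d_R(1) = 0$; once this is traced through the relevant tables, Corollary \ref{m10} delivers Tor-vanishing in every case.
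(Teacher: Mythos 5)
Your proposal matches the paper's argument exactly: the paper likewise proves the theorem by checking, case by case and via essentially the same citations, that $R$ is generalized Golod and that $d_R(1) \neq 0$, then applying Corollary~\ref{m10}. Your added observation about the no-embedded-deformation hypothesis eliminating the $(1-t^2)$ factor is a correct reading of the mechanism in \cite[Theorem 3.1]{avramov1989homological}, but it does not change the route.
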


\begin{remark}
Following  \c{S}ega \cite[1.4]{csega2003vanishing}, we say that a factorization $c(t) = p(t)q(t)r(t)$ in $\ZZ[t]$ is good if $p(t)$ is $1$ or irreducible, $q(t)$ has non-negative coefficients, $r(t)$ is $1$ or irreducible and has no positive real root among its complex roots of minimal absolute value. In \cite[Proposition 1.5]{csega2003vanishing}, \c{S}ega  proved that if $d_R(t)$ has a good factorization, then $R$ satisfies Tor-vanishing.

Statements  $(2) - (7)$ of the above theorem were originally proved by examining the good factorization of $d_R(t)$ which is often a tedious task.  If $c(t) \in \ZZ[t]$ has a good factorization, then it is straightforward to check that $c(1) \neq 0$. Thus Corollary \ref{m10} suggests an alternative approach to study the Tor-vanishing property.
\end{remark}

In view of Proposition \ref{cg} and Theorem \ref{240603}, the following two questions arise naturally.

\begin{question}
Let $(R, \m, k)$ be an Artinian Gorenstein ring. Does $R/ \soc R$ satisfy Tor-vanishing?
\end{question}

\begin{question}
Let $(S, \m, k)$ and $(T, \n, k)$ be Artinian Gorenstein local rings of length at least three. Does the connected sum $S \# T$  satisfy Tor-vanishing?
\end{question}
 
In conclusion, we make the following remark. 
\begin{remark}
In view of the result of Avramov \cite[Theorem 3.1]{avramov1989homological}, an inquisitive mind may enquire if $d_R(1) = 0$ implies that $R$ has an embedded deformation when $R$ is a generalized Golod ring. The answer is negative. In \cite[Proposition 3.1]{gasharov1990boundedness}, Gasharov and Peeva found that if $$R = \frac{\QQ[x_1,x_2,x_3,x_4,x_5]}{(\alpha x_1 x_3 + x_2 x_3,x_1x_4+x_2x_4,x_3^2-x_2x_5 + \alpha x_1x_5,x_4^2-x_2x_5 + x_1 x_5,x_1^2,x_2^2,x_3x_4,x_3x_5,x_4x_5,x_5^2)}, \alpha = 2$$ then $R$ is a Gorenstein local ring with Hilbert series $H(t) = 1 + 5t + 5t^2 + t^3$ and $R$ does not have an embedded deformation \cite[Remark 3.10]{gasharov1990boundedness}. Computations with Macaulay2 \cite{M2} show that $\{x_1, x_1\}$ is an exact pair of zero divisors \cite[Definition Section-1]{HenriquesSega11} and $x_2$ is a Conca generator modulo $x_1R$ \cite[3.6]{HenriquesSega11}, so the ring is generalized Golod and Koszul \cite[Proposition 3.9]{HenriquesSega11} with $P^R_k(t) = \frac{1}{H(-t)}$ \cite[Corollary 4.4 (2)]{HenriquesSega11}. It follows that $d_R(t) = (1 + t)^5H(-t)$, consequently $d_R(1) = 0$.
\end{remark}

\begin{acknowledgement}
We sincerely thank Srikanth B. Iyenger for his comment which helped us find example \ref{m134}. This paper is part of the PhD thesis of the first author. He acknowledges financial support: file number 09/1020(0176)/2019-EMR-I from Council of Scientific and Industrial Research for his PhD. The second author is thankful for \textit{INSPIRE Faculty Fellowship (DST)}, reference no DST/INSPIRE/04/2018/000522.
\end{acknowledgement}

\bibliographystyle{acm}
\bibliography{reference}

\end{document}